\newcommand{\pl}[1]{\foreignlanguage{polish}{#1}}
\theoremstyle{plain}
\newtheorem{theorem}{Theorem}
\newtheorem{proposition}{Proposition}[section]
\newtheorem{lemma}[proposition]{Lemma}
\theoremstyle{definition}
\newtheorem{definition}{Definition}[section]
\newtheorem{remark}{Remark}[section]
\numberwithin{equation}{section}
\newcounter{thm}
\theoremstyle{plain}
\newtheorem{main_theorem}[thm]{Theorem}
\newcommand{\RR}{\mathbb{R}}
\newcommand{\ZZ}{\mathbb{Z}}
\newcommand{\TT}{\mathbb{T}}
\newcommand{\CC}{\mathbb{C}}
\newcommand{\NN}{\mathbb{N}}
\newcommand{\QQ}{\mathbb{Q}}
\newcommand{\boldA}{\mathbf{A}}
\newcommand{\calC}{\mathcal{C}}
\newcommand{\calP}{\mathcal{P}}
\newcommand{\calQ}{\mathcal{Q}}
\newcommand{\calF}{\mathcal{F}}
\newcommand{\seq}[2]{{#1}: {#2}}
\newcommand{\ind}[1]{{\mathds{1}_{{#1}}}}
\renewcommand{\atop}[2]{\substack{{#1}\\{#2}}}
\newcommand{\norm}[1]{{\left\lvert #1 \right\rvert}}
\newcommand{\sprod}[2] {{#1 \cdot #2}}
\newcommand{\abs}[1]{{\lvert {#1} \rvert}}
\newcommand{\vnorm}[1]{{\left\lVert {#1} \right\rVert}}
\title[Discrete operators of Radon types]
{Square function estimates for discrete
Radon transforms
}
\author{Mariusz Mirek}
\address{Mariusz Mirek \\
	Universit\"{a}t Bonn \\
	Mathematical Institute\\
	Endenicher Allee 60\\
	D--53115 Bonn \\
	Germany \&
	Instytut Matematyczny\\
	Uniwersytet \pl{Wroc{\lll}awski}\\
	Pl. Grun\-waldzki 2/4\\
	50-384 \pl{Wroc{\lll}aw}\\
	Poland}
\email{mirek@math.uni-bonn.de}
\begin{document}
\selectlanguage{english}

\begin{abstract}
  We show $\ell^p\big(\ZZ^d\big)$ boundedness, for $p\in(1, \infty)$, of discrete
  singular integrals of Radon type with the aid of appropriate square
  function estimates, which can be thought as a discrete counterpart
  of the Littlewood--Paley theory. It is a  very powerful approach which
  allows us to proceed as in the continuous case. 
\end{abstract}

\maketitle

\section{Introduction}

\label{sec:1}
Assume that $K \in \calC^1\big(\RR^k \setminus \{0\}\big)$ is a Calder\'{o}n--Zygmund kernel
satisfying the differential inequality
\begin{align}
	\label{eq:3}
	\norm{y}^k \abs{K(y)} + \norm{y}^{k+1} \norm{\nabla K(y)} \leq 1
\end{align}
for all $y \in \RR^k$ with $\norm{y} \geq 1$ and the cancellation condition
\begin{align}
	\label{eq:4}
	\sup_{\lambda \geq 1}
	\Big\lvert
	\int\limits_{1 \leq \norm{y} \leq \lambda} K(y) {\rm d} y
	\Big\rvert
	\leq 1.
\end{align}
Let $\calP=(\calP_1,\ldots, \calP_{d_0}): \ZZ^{k} \rightarrow \ZZ^{d_0}$ be a mapping where 
each component $\calP_j:\ZZ^{k} \rightarrow \ZZ$ is
an integer-valued polynomial of $k$ variables with $\calP_j(0) =
0$. In the present article, as in \cite{iw}, we
are interested in the discrete  singular Radon transform $T^{\calP}$
defined  by 
\begin{align}
  \label{eq:31}
T^{\calP} f(x)
=
\sum_{y\in\ZZ^k\setminus\{0\}} f\big(x - \calP(y)\big) K(y)  
\end{align}
for a finitely
supported function $f: \ZZ^{d_0} \rightarrow \RR$.
We prove the following.
\begin{main_theorem}
	\label{thm:0}
        For every $p \in (1, \infty)$ there is $C_p > 0$ such that for all
		$f \in \ell^p\big(\ZZ^{d_0}\big)$
                \begin{align}
                  \label{eq:35}
		\big\lVert
		T^\calP f 
		\big\rVert_{\ell^p}
		\leq
		C_p \vnorm{f}_{\ell^p}.
                \end{align}
		Moreover, the constant $C_p$ is independent of the coefficients of the polynomial mapping 
		$\calP$.
\end{main_theorem}
Theorem \ref{thm:0} was proven by Ionescu and Wainger in \cite{iw}. 
The operator $T^{\calP}$ is a discrete analogue of the continuous
Radon transform $R^{\calP}$ defined by
\begin{align}
  \label{eq:36}
R^{\calP}f(x)
={\rm p.v.}\int_{\RR^k} f\big(x - \calP(y)\big) K(y){\rm d}y.  
\end{align}
Nowadays the operators $R^{\calP}$ and their $L^p\big(\RR^{d_0}\big)$
boundedness properties for $p\in(1, \infty)$ are very well understood. We refer to
\cite{bigs} for a detailed exposition and the references given there,
and see also \cite{cnsw} for more general cases and more
references. The key ingredient in proving $L^p\big(\RR^{d_0}\big)$
bounds for $R^{\calP}$ is the Littlewood--Paley theory. More
precisely, we begin with $L^2\big(\RR^{d_0}\big)$ theory which, based
on some oscillatory integral estimates for dyadic pieces of the
multiplier corresponding to $R^{\calP}$, provides bounds with
acceptable decays.  Then appealing to the Littlewood--Paley theory and
interpolation it is possible to obtain general
$L^p\big(\RR^{d_0}\big)$ bounds for all $p\in(1, \infty)$.
Now, one would like to follow the same scheme in the discrete
case. However, the situation for $T^{\calP}$ is much more complicated
due to arithmetic nature of this operator. Although
$\ell^2\big(\ZZ^{d_0}\big)$ theory is based on estimates for some
oscillatory integrals or rather exponential sums associated with dyadic pieces of the multiplier
corresponding to $T^{\calP}$ as it was shown in \cite{iw}, the
$\ell^p\big(\ZZ^{d_0}\big)$ theory did not fall under the
Littlewood--Paley paradigm as it is in the continuous case.

The main aim of this paper is to give a new proof of Theorem \ref{thm:0} using
square function techniques. We construct a suitable square function
which allows us to proceed as in the continuous case to obtain
$\ell^p\big(\ZZ^{d_0}\big)$ theory for the operator \eqref{eq:31}.
Our square function gives a new insight for these sort of problems
(see especially \cite{mst1} and \cite{mst2}) and
can be thought as a discrete counterpart of the Littlewood--Paley
theory.

\subsection{Outline of the strategy of our proof} Recall from \cite[Chapter 6, \S4.5, Chapter 13, \S5.3]{bigs} that
given a kernel $K$ satisfying \eqref{eq:3} and \eqref{eq:4} there are functions 
$\big(\seq{K_n}{n \geq 0}\big)$ and a constant $C > 0$ such that for $x \neq 0$
\begin{align}
\label{eq:87}
K(x) = \sum_{n = 0}^\infty K_n(x),  
\end{align}
where for each $n \geq 0$, the kernel $K_n$ is supported inside $2^{n-2} \leq \norm{x} \leq 2^n$,
satisfies
\begin{equation}
	\label{eq:88}
	\norm{x}^k \abs{K_n(x)} + \norm{x}^{k+1} \norm{\nabla K_n(x)} \leq C
\end{equation}
for all $x \in \RR^k$ such that $\norm{x} \geq 1$, and has integral
$0$, provided that $n \in\NN$. Thus in view of \eqref{eq:88}, instead
of \eqref{eq:35}, it suffices to show that for every $p\in(1, \infty)$
there is a constant $C_p>0$ such that
\begin{align}
  \label{eq:89}
\Big\|\sum_{n\ge0}T_n^{\calP}f\Big\|_{\ell^p}\le C_p\|f\|_{\ell^p}  
\end{align}
for all $f\in\ell^p\big(\ZZ^d\big)$, where
\begin{align}
  \label{eq:90}
T_n^{\calP} f(x)
=
\sum_{y\in\ZZ^k} f\big(x - \calQ(y)\big) K_n(y)    
\end{align}
for each $n\in\ZZ$.

As we mentioned before the proof of inequality \eqref{eq:89} will
strongly follow the scheme of the proof of the corresponding
 inequality from the continuous setup. Now we describe the key points
 of our approach. To avoid some technicalities assume
that $\calP(x)=(x^d,\ldots, x)$ is a moment curve for some $d=d_0\ge2$
and $k=1$. Let $m_n$ be the Fourier multiplier associated with the
operator $T_n^{\calP}$, i.e. $T_n^{\calP}f=\mathcal F^{-1}\big(m_n\hat
f\big)$. As in \cite{mst1} and \cite{mst2} we introduce a family of
appropriate projections $\big(\Xi_n(\xi): n\ge0\big)$ which will
localize asymptotic behaviour of $m_n(\xi)$. Namely, let
$\eta$ be a smooth bump function with a small support, fix $l \in
\NN$ and define for each integer $n\ge0$ the following projections
\begin{align}
\label{eq:201}
	\Xi_{n}(\xi)
	=\sum_{a/q \in\mathscr{U}_{n^l}}
	 \eta\big(\mathcal E_n^{-1}(\xi - a/q)\big)
      \end{align}
where  $\mathcal E_n$
is a diagonal $d\times d$  matrix with positive entries
$\big(\varepsilon_{j}: 1\le j\le d\big)$ such that
$\varepsilon_{j}\le e^{-n^{1/5}}$   and
\[
\mathscr{U}_{n^l}=\big\{a/q \in \TT^d\cap\QQ^d : a=(a_1, \ldots, a_d) \in
\NN^d_q \text{ and } \mathrm{gcd}(a_1, \ldots, a_d, q)=1 \text{ and }
q\in P_{n^l}\big\}
\]
for some family $P_{n^l}$ such that $\NN_{n^l}\subseteq
P_{n^l}\subseteq \NN_{e^{n^{1/10}}}$. All details are described in
Section \ref{sec:6}.  Exploiting the ideas of Ionescu and Wainger from
\cite{iw}, we prove that for every $p\in(1, \infty)$ there is a constant $C_{l,
  p}>0$ such that
\begin{align}
  \label{eq:202}
\big\|\calF^{-1}\big(\Xi_{n}\hat{f}\big)\big\|_{\ell^p}\le C_{l,
  p}\log (n+2)\|f\|_{\ell^p}.  
\end{align}
Inequality \eqref{eq:202} will be essential in our proof. Observe that
\eqref{eq:201} allows us to dominate \eqref{eq:89} as follows
\begin{align}
  \label{eq:91}
\Big\|\sum_{n\ge0}T_n^{\calP}f\Big\|_{\ell^p}\le\Big\|\sum_{n\ge0}\mathcal
F^{-1}\big(m_n\Xi_n\hat f\big)\Big\|_{\ell^p}
+\Big\|\sum_{n\ge0}\mathcal
F^{-1}\big(m_n(1-\Xi_n)\hat f\big)\Big\|_{\ell^p}  
\end{align}
and we can employ the ideas from  the circle method of Hardy and
Littlewood, which are implicit in the behaviour  of the projections
$\Xi_n$ and $1-\Xi_n$.
Namely, the second norm on the right-hand side of \eqref{eq:91} is bounded,
since the multiplier $m_n(1-\Xi_n)$ is highly oscillatory. Thus
appealing to \eqref{eq:202} and a variant of Weyl's inequality
with logarithmic decay which has been proven in \cite{mst1}, see Theorem
\ref{thm:3}, we can conclude that there is a constant $C_p>0$ such
that for each $n\ge0$ we have
\[
\big\|\calF^{-1}\big(m_{n}(1-\Xi_{n})\hat{f}\big)\big\|_{\ell^p}\le C_{
  p}(n+1)^{-2}\|f\|_{\ell^p}.
\]  
Now 
the whole difficulty  lies in proving 
\begin{align}
  \label{eq:32}
  \Big\|\sum_{n\ge0}\mathcal
F^{-1}\big(m_n\Xi_n\hat f\big)\Big\|_{\ell^p}\le C_p\|f\|_{\ell^p}.
\end{align}
For this purpose we  construct new multipliers of the form 
\begin{align}
\label{eq:47}
  \Delta_{n, s}^j(\xi)=\sum_{a/q\in\mathscr
    U_{(s+1)^l}\setminus\mathscr
    U_{s^l}}\big(\eta\big(\mathcal E_{n+j}(\xi-a/q)\big)-\eta\big(\mathcal E_{n+j+1}(\xi-a/q)\big)\big)\eta\big(\mathcal E_{s}(\xi-a/q)\big)
\end{align} 
such that
\[
\Xi_n(\xi)\simeq\sum_{j\in\ZZ}\sum_{s\ge0}\Delta_{n, s}^j(\xi).
\]
Moreover, we will be able to show in Theorem \ref{thm:30}, using Theorem \ref{th:3}, that for
each $p\in(1, \infty)$ there is a constant $C_p>0$ such that
\begin{align}
  \label{eq:48}
   \bigg\|\Big(\sum_{n\in\ZZ}\big|\mathcal
  F^{-1}\big(\Delta_{n,
    s}^j\hat{f}\big)\big|^2\Big)^{1/2}\bigg\|_{\ell^p}
	\le 
	C_p \log(s+2) \|f\|_{\ell^p}.
 \end{align}
 for any $s\ge0$, uniformly in $j\in\ZZ$. Estimate \eqref{eq:48} can
 be thought as a discrete counterpart of the Littlewood--Paley inequality,
 see Theorem \ref{thm:30}. This is the key ingredient in our proof
 which combined with the robust $\ell^2\big(\ZZ^d\big)$ estimate
\begin{align}
  \label{eq:49}
   \bigg\|\Big(\sum_{n\in\ZZ}\big|\mathcal
  F^{-1}\big(m_n\Delta_{n,
    s}^j\hat{f}\big)\big|^2\Big)^{1/2}\bigg\|_{\ell^2}
	\le 
	C2^{-\varepsilon|j|}(s+1)^{-\delta l} \|f\|_{\ell^2}
\end{align}
allows us to deduce \eqref{eq:32}. The last bound follows, since for
each $a/q\in\mathscr{U}_{(s+1)^l}\setminus \mathscr{U}_{s^l}$ we have
\[
m_{n}(\xi)=G(a/q)\Phi_{n}(\xi-a/q)+\mathcal O\big(2^{-n/2}\big)
\]
where $G(a/q)$ is the Gaussian sum and $\Phi_{n}$ is an integral
counterpart of $m_{n}$, precise definitions can be found at beginning
of Section \ref{sec:4}. This observation  leads to
\eqref{eq:49}, because $|G(a/q)|\le Cq^{-\delta}$ and
$q\ge s^l$ if $a/q\in\mathscr{U}_{(s+1)^l}\setminus
\mathscr{U}_{s^l}$. The decay in $|j|$ in \eqref{eq:49} follows from
the assumption on the support of $\Xi_{n, s}^j$ and the behaviour of
$\Phi_n(\xi-a/q)$, see Section \ref{sec:4} for more details. 

The ideas of exploiting projections \eqref{eq:201} have been initiated
in \cite{mst1} in the context of  $\ell^p\big(\ZZ^{d_0}\big)$
boundedness of maximal functions corresponding respectively to the averaging
Radon operators
\begin{align}
\label{eq:39}
	 M_N^{\calP} f(x)
	=N^{-k} \sum_{y\in\NN_N^k}
	f\big(x-\calP(y)\big)
\end{align}
where $\NN^k_N = \{1, 2, \ldots, N\}^k$, and the truncated singular
Radon transforms
\begin{align}
  \label{eq:51}
T_N^{\calP} f(x)
=
\sum_{y\in\mathbb B_N\setminus\{0\}} f\big(x - \calP(y)\big) K(y)  
\end{align}
where $\mathbb B_N=\{x\in\ZZ^k: |x|\le N\}$. These ideas, on the one
hand, resulted in a new proof for Bourgain's maximal operators
\cite{bou1, bou2, bou}. On the other hand, turned out to be flexible
enough and attack $\ell^p\big(\ZZ^{d_0}\big)$ boundedness of maximal
functions for operators with signs like in \eqref{eq:51}. In fact, in
\cite{mst1} we provided some vector-valued estimates for the maximal
functions associated with \eqref{eq:39} and \eqref{eq:51}. These
estimates found applications in variational estimates for
\eqref{eq:39} and \eqref{eq:51}, which have been the subject of
\cite{mst2}.  Our approach falls within the scope of a general scheme
which has been recently developed in \cite{mst1} and \cite{mst2} and
resulted in some unification in the theory of discrete analogues in
harmonic analysis. The novelty of this paper is that it provides some
counterpart of the Littlewood--Paley square function which is useful
in the problems with arithmetic flavor. Furthermore, this square
function theory  is also an invaluable ingredient in the
estimates of variational seminorm in \cite{mst2}.

The paper is organized as follows.  In Section \ref{sec:6} we prove
Theorem \ref{th:3} which is essential in our approach, and guarantees
\eqref{eq:202}.  Ionescu and Wainger in \cite{iw} proved this result
with $(\log N)^D$ loss in norm where $D>0$ is a large power.  In \cite{mst1}
we provided a slightly different proof and showed that $\log N$ is
possible. Moreover $\log N$ loss is sharp for the method which we
used.  Since Theorem \ref{th:3} is a deep theorem, which uses the most
sophisticated tools developed to date in the field of discrete
analogues, we have decided, for the sake of
completeness, to provide necessary details.  In Section \ref{sec:4} we
prove Theorem \ref{thm:0}. To understand more quickly the proof of
Theorem \ref{thm:0}, the reader may begin by looking at Section
\ref{sec:4} first. These sections can be read independently, assuming
the results from Section \ref{sec:6}.

\subsection{Basic reductions}
We set 
\[
N_0 = \max\{ \deg \calP_j : 1 \leq j \leq d_0\}
\]
and  define
the set
\[
\Gamma =
\big\{
	\gamma \in \ZZ^k \setminus\{0\} : 0 \leq \gamma_j \leq N_0
	\text{ for each } j = 1, \ldots, k
\big\}
\]
with the lexicographic order. Let $d$ be the cardinality of $\Gamma$. Then we can identify
$\RR^d$ with the space of all vectors whose coordinates are labeled by
multi-indices $\gamma \in \Gamma$.
Next we
introduce the canonical polynomial mapping
$$
\calQ = \big(\seq{\calQ_\gamma}{\gamma \in \Gamma}\big) : \ZZ^k \rightarrow \ZZ^d
$$
where $\calQ_\gamma(x) = x^\gamma$ and $x^\gamma=x_1^{\gamma_1}\cdot\ldots\cdot x_k^{\gamma_k}$.
The canonical polynomial mapping $\calQ$ determines anisotropic
dilations. Namely, let $A$ be 
 a diagonal $d \times d$
matrix such that
$$
(A v)_\gamma = \abs{\gamma} v_\gamma
$$
for any $v\in\RR^d$ and $\gamma\in\Gamma$, where $|\gamma|=\gamma_1+\ldots+\gamma_k$. Then for every $t > 0$ we set
$$
t^{A}=\exp(A\log t)
$$
i.e. $t^A x=(t^{|\gamma|}x_{\gamma}: \gamma\in \Gamma)$ for
$x\in\RR^d$ and we see that  $\calQ(tx)=t^A\calQ(x)$.

 Observe also that each $\calP_j$ can be expressed as
$$
\calP_j(x) = \sum_{\gamma \in \Gamma} c_j^\gamma x^\gamma
$$
for some $c_j^\gamma \in \RR$. Moreover, the coefficients
$\big(\seq{c_j^\gamma}{\gamma \in \Gamma, j \in \{1, \ldots,
  d_0\}}\big)$ define a linear transformation $L: \RR^d \rightarrow
\RR^{d_0}$ such that $L\calQ = \calP$. Indeed, it is enough to set
$$
(L v)_j = \sum_{\gamma \in \Gamma} c_j^\gamma v_\gamma
$$
for each $j \in \{1, \ldots, d_0\}$ and $v \in \RR^d$. Now instead of
proving Theorem \ref{thm:0} we show the following. 
\begin{main_theorem}
\label{thm:7}
        For every $p \in (1, \infty)$ there is $C_p > 0$ such that for all
		$f \in \ell^p\big(\ZZ^{d}\big)$
                \begin{align}
\label{eq:37}
		\big\lVert
		T^\calQ f 
		\big\rVert_{\ell^p}
		\leq
		C_p \vnorm{f}_{\ell^p}.
                \end{align}
\end{main_theorem}
In view of \cite[Section 11]{bigs} we can perform some
lifting procedure which allows us to replace the underlying polynomial
mapping $\calP$ from \eqref{eq:35} by the canonical polynomial mapping
$\calQ$. Moreover, it shows  that
\eqref{eq:37} implies \eqref{eq:35} with the same constant
$C_p$, see also \cite{iw} for more details. Therefore, the matters are reduced to proving \eqref{eq:37} for
the canonical polynomial mapping.  The advantage of working with the
canonical polynomial mapping $\calQ$ is that it has all coefficients
equal to $1$, and the uniform bound in this case is immediate.  From
now on for simplicity of notation we will write $T=T^{\calQ}$.

\subsection{Notation}
Throughout the whole article $C > 0$ will stand for a positive constant (possibly
large constant) whose value may change from occurrence to
occurrence.  If there is an absolute constant $C>0$ such that $A\le CB$ ($A\ge
CB$) then we will  write $A \lesssim B$ ($A \gtrsim
B$).  Moreover,  we will write $A \simeq B$ if $A \lesssim B$ and $A\gtrsim
B$ hold simultaneously, and  we will denote $A \lesssim_{\delta} B$
($A \gtrsim_{\delta} B$) to indicate that the constant $C>0$ depends
on some $\delta > 0$. Let $\NN_0 = \NN \cup \{0\}$ and for $N \in \NN$ we
set
\[
    \NN_N = \big\{1, 2, \ldots, N\big\}.
\]
For a vector $x \in \RR^d$ we will use the following norms
\[
    \norm{x}_\infty = \max\{\abs{x_j} : 1 \leq j \leq d\}, \quad \text{and} \quad
    \norm{x} = \Big(\sum_{j = 1}^d \abs{x_j}^2\Big)^{1/2}.
\]
If $\gamma$ is a multi-index from $\NN_0^k$ then $\norm{\gamma} = \gamma_1 + \ldots
+ \gamma_k$. Although, we use $|\cdot|$ for the length of a
multi-index $\gamma\in \NN_0^k$
and the Euclidean norm of $x\in\RR^d$, their meaning will be always
clear from the context and it will cause no confusions in
the sequel.

\subsection*{Acknowledgments}
I am grateful to Wojtek Samotij for explaining  a very
nice probabilistic argument which simplifies the proof of Lemma \ref{lem:12}.

\section{ Ionescu--Wainger type multipliers}
\label{sec:6}

For a function 
$f \in L^1\big(\RR^d\big)$ let $\calF$ denote the Fourier transform on $\RR^d$ defined as
$$
\calF f(\xi) = \int_{\RR^d} f(x) e^{2\pi i \sprod{\xi}{x}} {\rm d}x.
$$
If $f \in \ell^1\big(\ZZ^d\big)$ we set
$$
\hat{f}(\xi) = \sum_{x \in \ZZ^d} f(x) e^{2\pi i \sprod{\xi}{x}}.
$$
To simplify the notation we denote by $\mathcal F^{-1}$ the inverse
Fourier transform on $\RR^d$ and the inverse Fourier transform on the
torus $\TT^d\equiv[0, 1)^d$ (Fourier coefficients). The meaning of
$\calF^{-1}$ will be always clear from the context.  Let $\eta: \RR^d
\rightarrow \RR$ be a smooth function such that $0 \leq \eta(x) \leq
1$ and
\[
\eta(x) =
\begin{cases}
	1 & \text{ for } \norm{x} \leq 1/(16 d),\\
	0 & \text{ for } \norm{x} \geq 1/(8 d).
\end{cases}
\]
\begin{remark}
\label{rem:1}
We will additionally assume that $\eta$ is a convolution of two non-negative smooth functions
$\phi$ and $\psi$ with compact supports contained in $(-1/(8d), 1/(8d))^d$.   
\end{remark}

This section is intended to prove Theorem \ref{th:3}, which
is inspired by  the ideas of Ionescu and Wainger from
\cite{iw}.  Let $\rho>0$ and  for every $N\in\NN$ define
\[
N_0=\lfloor
N^{\rho/2}\rfloor+1 \qquad \text{and} \qquad Q_0=(N_0!)^D
\]
where
$D=D_{\rho}=\lfloor 2/\rho\rfloor+1$. Let  $\mathbb P_N=\mathbb P\cap
(N_0, N]$ where $\mathbb P$ is the set
of all prime numbers. For any
$V\subseteq\mathbb P_N$ we define
\[
\Pi(V)=\bigcup_{k\in\NN_D}\Pi_k(V)
\]
where for any $k\in\NN_D$
\[
\Pi_k(V)=\big\{p_{1}^{\gamma_{1}}\cdot\ldots\cdot
p_{k}^{\gamma_{k}}: \ \gamma_{l}\in\NN_D\  \text{and}\ p_{l}\in
V\ \text{are distinct for all $1\le l\le k$} \big\}.
\]
In other words $\Pi(V)$ is the set of all products of primes factors from $V$ of
length at most $D$, at powers between $1$ and $D$. Now we introduce
the sets
\[
 P_N=\big\{q=Q\cdot w: Q|Q_0\ \text{and}\ w\in
\Pi(\mathbb P_{N})\cup\{1\}\big\}.
\]
It is not difficult
to see  that every integer $q\in\NN_N$ can be uniquely
written as $q=Q\cdot w$ where $Q|Q_0$ and $w\in \Pi(\mathbb
P_N)\cup\{1\}$.  Moreover, for sufficiently large $N\in\NN$ we have 
\[q=Q\cdot w\le Q_0\cdot w\le (N_0!)^DN^{D^2}\le
e^{N^{\rho}}
\]
thus  
we have 
$\NN_N\subseteq P_N\subseteq\NN_{e^{N^{\rho}}}$. Furthermore, if
$N_1\le N_2$ then  $P_{N_1}\subseteq P_{N_2}$.

For a subset $S\subseteq\NN$ we define 
\[
\mathcal R(S)=\big\{a/q \in \TT^d\cap\QQ^d : a \in A_q \text{ and }
q\in S\big\}
\] 
where for each $q\in\NN$ 
\[
A_q=\big\{a\in\NN_q^d: \mathrm{gcd}\big(q, \mathrm(a_{\gamma}: \gamma\in\Gamma)\big)\big\}.
\]
Finally, for each $N\in\NN$ we will consider the sets
\begin{align}
  \label{eq:156}
\mathscr{U}_N=\mathcal R(P_N).
\end{align}
It is easy to see, if $N_1\le N_2$ then  $\mathscr{U}_{N_1}\subseteq \mathscr{U}_{N_2}$.

 We will assume that $\Theta$ is a multiplier on $\RR^d$ and
for every $p\in(1, \infty)$ there is a constant $\boldA_p >0 $ such that for every
$f\in L^2\big(\RR^d\big)\cap L^p\big(\RR^d\big)$ we have
\begin{align}
  \label{eq:155}
  \big\lVert\calF^{-1}\big(\Theta\mathcal F f\big)\big\rVert_{L^p}
  \leq
  \boldA_p \vnorm{f}_{L^p}.
\end{align}

For each $N \in \NN$
we define new periodic  multipliers
\begin{align}
	\label{eq:74}
	\Delta_N(\xi)
	=\sum_{a/q \in\mathscr{U}_N}
	 \Theta(\xi - a/q) \eta_N(\xi - a/q)
      \end{align}
where $\eta_N(\xi)=\eta\big(\mathcal E_N^{-1}\xi\big)$ and $\mathcal E_N$
is a diagonal $d\times d$  matrix with  positive entries
$(\varepsilon_{\gamma}: \gamma\in\Gamma)$ such that
$\varepsilon_{\gamma}
\le e^{-N^{2\rho}}$.
The main result is the following.
\begin{theorem}
	\label{th:3}
	Let $\Theta$ be a  multiplier on $\RR^d$ obeying \eqref{eq:155}. 
	Then for every $\rho>0$ and $p\in(1, \infty)$ there is a
        constant $C_{\rho, p} > 0$ such that
	for any $N\in\NN$ and $f \in \ell^p\big(\ZZ^d\big)$
	\begin{align}
		\label{eq:12}
		\big\lVert
		\calF^{-1}\big(\Delta_{N}
		\hat{f}\big)\big\rVert_{\ell^p} 
		\leq C_{\rho, p} (\boldA_p+1)\log N
		\vnorm{f}_{\ell^p}.
	\end{align}
\end{theorem}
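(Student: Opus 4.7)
The plan is to follow the Ionescu--Wainger strategy from \cite{iw}, incorporating the refinement carried out in \cite{mst1} that brings the loss down from $(\log N)^D$ to $\log N$. The argument naturally splits into three stages: a combinatorial reduction of $\mathscr{U}_N$ to elementary "building blocks", a transference from $\RR^d$ to $\ZZ^d$ via the Magyar--Stein--Wainger (MSW) sampling principle that absorbs the continuous bound $\boldA_p$, and a dyadic aggregation across prime blocks that produces the $\log N$ factor.

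In the first stage, I would exploit the canonical factorization $q = Q \cdot w$ with $Q \mid Q_0$ and $w = p_1^{\gamma_1}\cdots p_k^{\gamma_k}$, where $k \leq D$, $\gamma_i \in \{1,\ldots,D\}$, and $p_i \in \mathbb{P}_N$ are distinct. Since $D = D_\rho$ is fixed, the number of patterns $(k;\gamma_1,\ldots,\gamma_k)$ is $O_\rho(1)$, so it suffices to estimate $\Delta_N$ with the sum restricted to denominators of a fixed exponent pattern. The divisor $Q \mid Q_0$ plays the role of a "smooth" factor that can be handled uniformly: the MSW sampling principle applied at scale $Q_0$ reduces its contribution to $C_\rho \boldA_p$, and the Chinese Remainder Theorem identifies rationals with denominator $Q\cdot w$ with pairs in $\{a/Q\} \times \{b/w\}$ whose bump supports (of radius $e^{-N^{2\rho}}$) are well-separated.

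In the second stage, for a single "block" indexed by a fixed tuple of distinct primes $(p_1,\ldots,p_k)$, the associated periodic multiplier is, up to the disjointness of bumps, an exact periodization of the Euclidean multiplier $\Theta \cdot \eta_N$. Here the key quantitative input is that $\eta_N$ is supported in a ball of radius $\le e^{-N^{2\rho}}$ while the denominators satisfy $q \le e^{N^\rho}$, so bumps at distinct $a/q \in \mathscr{U}_N$ are disjoint and the separation is far better than $q^{-2}$. The MSW sampling principle then produces a uniform bound of the form $C \boldA_p \vnorm{f}_{\ell^p}$ per block, independent of which primes were chosen.

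The third and crucial stage — and the main obstacle — is to aggregate the contributions from all prime tuples while losing only a factor $\log N$. A triangle inequality directly across the $O(N/\log N)$ primes is far too lossy, and even grouping primes into $D$-tuples gives $(\log N)^D$. To obtain the sharp bound, I would dyadically partition $\mathbb{P}_N = \bigsqcup_j I_j$ into $O(\log N)$ intervals, regroup denominators $w$ according to which dyadic intervals $I_{j_1},\ldots,I_{j_k}$ their prime factors lie in, and then use the Khintchine/Rademacher randomization argument from \cite{mst1} (which Wojtek Samotij's probabilistic observation, acknowledged in the paper, helps streamline) together with the CRT factorization of bumps across disjoint prime blocks. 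The randomization preserves linear dependence on $\boldA_p$ and converts the product structure across $k \le D$ dyadic blocks into a single $\log N$ factor, yielding the bound $C_{\rho,p}(\boldA_p+1)\log N$. This final step is exactly the part that requires the deep machinery from \cite{iw} and its refinement in \cite{mst1}, and is where the argument's sharpness comes from.
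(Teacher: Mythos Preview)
Your overall architecture—combinatorial reduction of $\mathscr{U}_N$, a transference step absorbing $\boldA_p$, then aggregation across prime blocks—matches the paper's, but Stage~3 as written has a genuine gap, and it is precisely the step where the $\log N$ (as opposed to $(\log N)^D$) is won or lost.

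Dyadically partitioning $\mathbb{P}_N$ into $O(\log N)$ intervals $I_j$ and grouping $w = p_1^{\gamma_1}\cdots p_k^{\gamma_k}$ by the tuple $(j_1,\ldots,j_k)$ of interval indices produces $O\big((\log N)^k\big)$ cells, not $O(\log N)$; worse, cells with repeated indices (two prime factors in the same $I_j$) do not carry the coprimality/product structure you need for the single-block bound. Khintchine/Rademacher randomization cannot collapse $(\log N)^k$ independent cells into $\log N$: it converts sums into square functions within a fixed decomposition, it does not reduce the number of pieces. This is exactly why the original argument in \cite{iw} incurs a $(\log N)^D$ loss.

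What the paper does instead is Lemma~\ref{lem:12}: rather than dyadic intervals, it uses a \emph{random $k$-coloring} of $\mathbb{P}_N$. A fixed $k$-set of primes is rainbow under a uniform random coloring with probability $k!/k^k$, so $r = O_k(\log N)$ independent colorings suffice, with positive probability, for every $k$-subset to be rainbow in at least one of them. This yields $O_\rho(\log N)$ sets $\Lambda_i$ each enjoying the $\mathcal{O}$~property of Definition~\ref{def:1}, and it is that property—pairwise coprime factor sets $S_1,\ldots,S_k$—that drives the uniform single-block bound via the square-function recursion (Theorem~\ref{thm:5}) and the sampling step (Proposition~\ref{prop:5}). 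Samotij's probabilistic observation referenced in the acknowledgments is precisely this coloring argument, not a Rademacher trick. If you replace the dyadic partition in your Stage~3 by this random-coloring lemma, the remainder of your outline aligns with the paper.
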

The main constructing blocks have been gathered in the next three
subsections. Theorem \ref{th:3} is a consequence of Theorem
\ref{thm:5} and Proposition \ref{prop:5} proved below.  To prove
Theorem \ref{th:3} we find some $C_{\rho}>0$ and disjoint sets
$\mathscr{U}_N^i\subseteq\mathscr{U}_N$ such that
\[
\mathscr{U}_N=\bigcup_{1\le i\le C_{\rho}\log N}\mathscr{U}_N^i
\]
and we show that $\Delta_N$ with the
summation restricted to  $\mathscr{U}_N^i$ is bounded on
$\ell^p\big(\ZZ^d\big)$ for every $p\in(1, \infty)$. In order to
construct $\mathscr{U}_N^i$ we need a suitable partition of
integers from the set $\Pi(\mathbb P_{N})\cup\{1\}$, see also
\cite{iw}.

\subsection{Fundamental combinatorial lemma} 
We begin with
\begin{definition}
\label{def:1}
A subset $\Lambda\subseteq \Pi(V)$ has
$\mathcal O$ property if there is $k\in\NN_D$ and there are sets 
$S_1, S_2,\ldots, S_k$ with the following properties:
\begin{enumerate}
\item[(i)] for each $1\le j\le k$ there is $
  \beta_j\in\NN$ such that $S_j=\{q_{j, 1},\ldots, q_{j, \beta_j}\}$;
\item[(ii)] for every $q_{j, s}\in S_j$ there are $p_{j, s}\in V$ and
  $\gamma_j\in\NN_D$ such that $q_{j, s}=p_{j, s}^{\gamma_j}$;
\item[(iii)] for every $w\in W$ there are unique numbers $q_{1, s_1}\in
  S_1,\ldots, q_{k, s_k}\in S_k$ such that $w=q_{1,
    s_1}\cdot\ldots\cdot q_{k, s_k}$;
\item[(iv)] if $(j, s)\not=(j', s')$ then $(q_{j, s}, q_{j', s'})=1$. 
\end{enumerate}  
\end{definition}
Now three comments are in order. 
\begin{itemize}
\item[(i)] The set $\Lambda=\{1\}$ has $\mathcal O$ property
corresponding to $k=0$.
\item[(ii)] If $\Lambda$ has $\mathcal O$ property,
then each subset $\Lambda'\subseteq \Lambda$ has $\mathcal O$ property as
well. 
\item[(iii)] 
  If a set $\Lambda$ has
$\mathcal O$ property then each element of $\Lambda$ has the same number of
prime factors $k\le D$.
\end{itemize}
The main result is the following.
\begin{lemma}
\label{lem:12}
For every $\rho>0$ there exists a constant $C_{\rho}>0$ such that for
every $N\in\NN$ the set $\mathscr{U}_N$ can be written as a
disjoint union of at most $C_{\rho}\log N$ sets
$\mathscr{U}_N^i=\mathcal R(P_N^i)$ where
\begin{align}
  \label{eq:14}
   P_N^i =\big\{
  q=Q\cdot w: Q|Q_0\ \text{and}\ w\in {\Lambda}_i(\mathbb P_N)\big\}
\end{align}
and 
${\Lambda}_i(\mathbb P_N)
\subseteq \Pi(\mathbb P_N)\cup\{1\}$ has $\mathcal O$ property
for each integer $1\le i\le C_{\rho}\log N$. 
\end{lemma}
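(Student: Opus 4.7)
Since $\mathscr U_N=\mathcal R(P_N)$ and every $q\in P_N$ decomposes uniquely as $q=Q\cdot w$ with $Q\mid Q_0$ and $w\in\Pi(\mathbb P_N)\cup\{1\}$, my plan is to partition $\Pi(\mathbb P_N)\cup\{1\}$ into at most $C_\rho\log N$ subsets each having $\mathcal O$ property, and then read off the sets $P_N^i$ and $\mathscr U_N^i$ from~\eqref{eq:14}. The singleton class $\{1\}$ has $\mathcal O$ property by comment~(i) following Definition~\ref{def:1}, so I would set it aside in a class of its own and focus on partitioning $\Pi(\mathbb P_N)$.

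Following the random colouring argument alluded to in the acknowledgments, I would fix a constant $K=K(\rho)\geq 2D$ and draw $L$ independent uniform random colourings $c_1,\dots,c_L\colon\mathbb P_N\to\{1,\dots,K\}$. For $w=p_1^{\alpha_1}\cdots p_k^{\alpha_k}\in\Pi(\mathbb P_N)$ with $k\le D$ distinct primes, call a colouring $c$ \emph{separating for~$w$} if $c(p_1),\dots,c(p_k)$ are pairwise distinct. A single colouring separates $w$ with probability at least $c_0:=\prod_{j=0}^{D-1}(1-j/K)>0$, uniformly in~$w$. Since $|\Pi(\mathbb P_N)|\leq\sum_{k=1}^{D}\binom{|\mathbb P_N|}{k}D^k\leq N^{2D}$ for $N$ large, the union bound shows that the probability that some $w\in\Pi(\mathbb P_N)$ is separated by no $c_\ell$ is at most $N^{2D}(1-c_0)^L$; taking $L=\lceil C_\rho\log N\rceil$ with $C_\rho$ sufficiently large makes this less than~$1$, so deterministic colourings $c_1,\dots,c_L$ with the required joint separation property exist.

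For each $\ell$ and each \emph{profile} $\pi=((c_j,\gamma_j))_{j=1}^k$ with $k\leq D$, pairwise distinct $c_j\in\{1,\dots,K\}$ arranged in increasing order and $\gamma_j\in\NN_D$, I would let $\Lambda_{\ell,\pi}$ be the set of all $w\in\Pi(\mathbb P_N)$ such that $c_\ell$ is separating for~$w$ and the sequence of (colour, exponent) pairs of $w$ under $c_\ell$, sorted by colour, equals~$\pi$. With
\[
S_j=\{p^{\gamma_j}\colon p\in\mathbb P_N,\ c_\ell(p)=c_j,\ p\mid w\text{ for some }w\in\Lambda_{\ell,\pi}\},
\]
the set $\Lambda_{\ell,\pi}$ has $\mathcal O$ property with columns $S_1,\dots,S_k$ and exponents $\gamma_1,\dots,\gamma_k$: condition~(iv) of Definition~\ref{def:1} holds because different columns use different colours and within a column primes are distinct, while condition~(iii) holds since for any $w\in\Lambda_{\ell,\pi}$ the colouring $c_\ell$ combined with the profile prescribes uniquely which prime power of $w$ sits in which column. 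The total number of such sets $\Lambda_{\ell,\pi}$ is at most $L\cdot K^D\cdot D^D\leq C_\rho'\log N$, and they cover $\Pi(\mathbb P_N)$. To turn the cover into a partition I would fix an arbitrary ordering of the pairs $(\ell,\pi)$ and assign each $w$ to the $\Lambda_{\ell,\pi}$ of smallest index containing~$w$; by comment~(ii) following Definition~\ref{def:1}, each resulting restricted class still has $\mathcal O$ property, and together with the singleton $\{1\}$ this gives the required partition.

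The main obstacle is the careful calibration of $K$, $D$ and $L$: $K$ has to be taken large in terms of $D=D_\rho$ so that the separation probability $c_0$ stays bounded away from zero uniformly in~$w$, and only then does the polynomial bound $N^{2D}$ on $|\Pi(\mathbb P_N)|$ together with the exponential factor $(1-c_0)^L$ force $L$ of order $\log N$ to suffice in the union bound. Once the parameters are in place, the verification of $\mathcal O$ property for the profile classes $\Lambda_{\ell,\pi}$ is a direct unpacking of Definition~\ref{def:1}.
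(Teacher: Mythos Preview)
Your argument is correct and is essentially the same probabilistic method as in the paper: random colourings of the primes in $\mathbb P_N$, a union bound showing that $O_\rho(\log N)$ colourings suffice to separate the prime factors of every $w\in\Pi(\mathbb P_N)$, and then a decomposition into classes indexed by colour/exponent profiles, each of which has $\mathcal O$~property. The only cosmetic difference is that the paper first fixes $k\in\NN_D$ and the exponent vector $\gamma\in\NN_D^k$ and then uses random $k$-partitions of $V$, whereas you use a single family of $K$-colourings (with $K\ge 2D$) for all $k$ simultaneously and sort out $k$ and the exponents via the profile~$\pi$; both routes lead to the same $C_\rho\log N$ count and the same verification of Definition~\ref{def:1}.
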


\begin{proof}
  We have to prove that for every $V\subseteq \mathbb P_N$ the set
  $\Pi(V)$ can be written as a disjoint union of at most $C_k\log N$
  sets with $\mathcal O$ property. Fix $k\in\NN_D$, let 
  $\gamma=(\gamma_1,\ldots,\gamma_k)\in\NN_D^k$ be a multi-index
  and observe that 
\[
\Pi_k(V)=\bigcup_{\gamma\in\NN_D^k}\Pi_k^{\gamma}(V)
\]
where 
\[
\Pi_k^{\gamma}(V)=\big\{p_{1}^{\gamma_1}\cdot\ldots\cdot
p_{k}^{\gamma_k}:\ \text{$p_l\in V$ are distinct for all $1\le l\le k$} \big\}.
\]
Since there are $D^k$ possible choices of exponents
$\gamma_1,\ldots,\gamma_k\in\NN_D$ when $k\in\NN_D$, it only suffices
to prove that every $\Pi_k^{\gamma}(V)$ can be
partitioned into a union (not necessarily disjoint) of at most
$C_k\log N$ sets with  $\mathcal O$ property.

We claim that for each $k\in\NN$ there is a constant $C_k>0$ and a family 
\begin{align}
  \label{eq:11}
\pi=\big\{\pi_i(V): 1\le i\le C_k\log |V|\big\}  
\end{align}
 of partitions of $V$ such that 
 \begin{itemize}
 \item[(i)] 
for every $1\le i\le C_k\log |V|$ 
each $\pi_i(V)=\{V_1^i,\ldots, V_k^i\}$ consists of pairwise disjoint
subsets of $V$
and
$V=V_1^i\cup\ldots\cup V_k^i$; 
\item[(ii)] for every $E\subseteq V$ with at least $k$
elements there exists $\pi_i(V)=\{V_1^i,\ldots, V_k^i\}\in\pi$ such that $E\cap
V_j^i\not=\emptyset$ for every $1\le j\le k$.  
 \end{itemize}
Assume for a moment we have constructed a family $\pi$ as in \eqref{eq:11}. Then one
sees that for a fixed $\gamma\in\NN_D^k$ we have
\begin{align}
  \label{eq:28}
\Pi_k^{\gamma}(V)=\bigcup_{1\le i\le C_{k}\log |V|}\Pi_{k, i}^{\gamma}(V)  
\end{align}
where
\[
\Pi_{k, i}^{\gamma}(V)=\big\{p_{1}^{\gamma_1}\cdot\ldots\cdot
p_{k}^{\gamma_k}:  p_{j}^{\gamma_j}\in \big(V\cap V_j^i)^{\gamma_j}
\text{  and $V_j^i\in\pi_i(V)$ for each $1\le j\le k$}\big\}
\]
and $\big(V\cap V_j^i)^{\gamma_j}=\{p^{\gamma_j}: p\in V\cap V_j^i\}
$. Indeed, the sum on the right-hand side of \eqref{eq:28} is
contained in $\Pi_k^{\gamma}(V)$ since each $\Pi_{k, i}^{\gamma}(V)$
is. For the opposite inclusion take $p_{1}^{\gamma_1}\cdot\ldots\cdot
p_{k}^{\gamma_k}\in\Pi_k^{\gamma}(V)$ and let $E=\{p_1,\ldots, p_k\}$,
then property (ii) for the family \eqref{eq:11} ensures that there is
$\pi_i(V)=\{V_1^i,\ldots, V_k^i\}\in\pi$ such that $E\cap
V_j^i\not=\emptyset$ for every $1\le j\le k$. Therefore,
$p_{1}^{\gamma_1}\cdot\ldots\cdot p_{k}^{\gamma_k}\in\Pi_{k,
  i}^{\gamma}(V)$. Furthermore, we see that for each $1\le i\le C_k\log
N$ the sets $\Pi_{k, i}^{\gamma}(V)$ have $\mathcal O$ property.  

The proof will be completed if we construct the family $\pi$ as in
\eqref{eq:11} for the set $V$. We assume, for simplicity, that
$V=\NN_N$  but the result is
true for all $V\subseteq \NN_N$ containing at least $k$ elements. Now
it will be more comfortable to work with surjective mappings
$f:\NN_N\mapsto\NN_k$ rather than with partitions of $\NN_N$ into $k$
non-empty subsets.  It will cause no changes to us, since  every surjection 
$f:\NN_N\mapsto\NN_k$ determines a partition $\{f^{-1}[\{m\}]: 1\le
m\le k\}$ of $\NN_N$ into $k$ non-empty subsets.  

For the proof we employ a probabilistic argument.  Indeed, let
$f:\NN_N\mapsto \NN_k$ be a random surjective mapping. Assume that for
every $n\in\NN_N$ and $m\in\NN_k$ we have $\mathds{P}(\{f(n)=m\})=1/k$
independently of all other $n\in\NN_N$.  For every $E\subseteq\NN_N$
with $k$ elements we have $\mathds P(\{|f[E]|=k\})=k!/k^k$. 
It suffices to show that for some $r\simeq_k\log N$ and $f_1, \ldots,
f_r$ random surjections  we have
\[
\mathds P(\{\forall_{E\subseteq\NN_N}\ |E|=k\ \exists _{1\le l\le r}\
|f_l[E]|=k\})>0.
\]
In other words,  for each $E\subseteq\NN_N$ with cardinality $k$ it is always possible to
find, with a positive probability,  among at most $C_k\log N$ random surjections 
 at least one $f:\NN_N\mapsto \NN_k$ such that $|f[E]|=k$. Then the
 set $\{f^{-1}[\{m\}]: 1\le
m\le k\}$ is a partition of $\NN_N$ and $E\cap
f^{-1}[\{m\}]\not=\emptyset$ for every $1\le m\le k$.

 The task now is
the determine the exact value of $r\simeq_k\log N$.
Take now
$1\le r\le N$ independent random surjections $f_1, \ldots, f_r$ and observe that
\begin{multline*}
\mathds P(\{\exists_{E\subseteq\NN_N}\ |E|=k\ \forall _{1\le l\le r}\
|f_l[E]|<k\})\le \sum_{E\subseteq \NN_N:\: |E|=k}
\mathds P(\{\forall _{1\le l\le r}\
|f_l[E]|<k\})  \\
=\sum_{E\subseteq \NN_N:\:
  |E|=k}\bigg(1-\frac{k!}{k^k}\bigg)^r=\binom{N}{k}\bigg(1-\frac{k!}{k^k}\bigg)^r
\le\bigg(\frac{eN}{k}\bigg)^ke^{-r\frac{k!}{k^k}}
=e^{k\log(\frac{eN}{k})-r\frac{k!}{k^k}}.
\end{multline*}
Therefore 
\[
\mathds P(\{\exists_{E\subseteq\NN_N}\ |E|=k\ \forall _{1\le l\le r}\
|f_l[E]|<k\})<1
\]
 if and only if 
\[
r>\frac{k^{k+1}}{k!}\log\bigg(\frac{eN}{k}\bigg).
\]
Thus taking 
\[
r=\bigg\lceil\frac{k^{k+1}}{k!}\log\bigg(\frac{eN}{k}\bigg)\bigg\rceil+1\simeq
C_k\log N
\]
we see that it does the job. 
This completes the proof of Lemma \ref{lem:12}.
\end{proof}

\subsection{Further reductions and square function estimates}
Now we can write 
\begin{align*}
\Delta_N=\sum_{1\le i\le C_{\rho}\log N}\Delta_N^i
\end{align*}
where for each $1\le i\le C_{\rho}\log N$ 
\begin{align}
  \label{eq:290}
	\Delta_N^i(\xi)
	=\sum_{a/q \in\mathscr{U}_N^i}
	 \Theta(\xi - a/q) \eta_N(\xi - a/q)  
\end{align}
with $\mathscr{U}_N^i$  as in Lemma \ref{lem:12}.
The proof of Theorem \ref{th:3} will be completed if we
show that for every $p\in(1, \infty)$ and $\rho>0$, there is a
constant $C>0$ such that for any $N\in\NN$ and
$1\le i\le C_{\rho} \log N$ we have
 \begin{align}
\label{eq:17}
        \big\lVert
        \calF^{-1}\big(\Delta_N^{i} \hat{f}\big)\big\rVert_{\ell^{p}} 
		\leq C (\boldA_p+1) \vnorm{f}_{\ell^{p}}
        \end{align}
        for every $f\in\ell^p\big(\ZZ^d\big)$.  

Let 
        \begin{align}
          \label{eq:300}
\Lambda \subseteq \Pi(\mathbb P_N)\cup\{1\}          
        \end{align}
        be a set with $\mathcal O$ property, see Definition
        \ref{def:1}. Define
\[
\mathscr{U}_N^\Lambda=\mathcal R\big(\big\{q=Q\cdot w: Q|Q_0\ \text{and}\ w\in
\Lambda\big\}\big)
\]
and $\mathscr{W}_N =\mathcal R(\Lambda)$, and we introduce
\begin{align}
  \label{eq:301}
  \Delta_N^\Lambda(\xi)
	=\sum_{a/q \in\mathscr{U}_N^\Lambda}
	 \Theta(\xi - a/q) \eta_N(\xi - a/q).  
\end{align}
We show that for every $p\in(1, \infty)$ and $\rho>0$, there is a
constant $C>0$ such that for any  $N\ge 8^{\max\{p, p'\}/\rho}$ and for
any set $\Lambda$ as in \eqref{eq:300} and
        for every $f\in\ell^p\big(\ZZ^d\big)$ we have
 \begin{align}
\label{eq:302}
        \big\lVert
        \calF^{-1}\big(\Delta_N^{\Lambda} \hat{f}\big)\big\rVert_{\ell^{p}} 
		\leq C (\boldA_p+1) \vnorm{f}_{\ell^{p}}.
        \end{align}
For  $N \le 8^{\max\{p, p'\}/\rho}$ the bound in \eqref{eq:302} is obvious, since we allow the
constant $C>0$ to depend on $p$ and $\rho$. Moreover, by the
duality and interpolation, it suffices to prove \eqref{eq:302}
for $p=2r$ where $r\in\NN$. If $\Lambda=\Lambda_i(P_N)$ as in Lemma
\ref{lem:12} for some $1\le i\le
C_{\rho}\log N$, then we see that
$\mathscr{U}_N^\Lambda=\mathscr{U}_N^i$ and
$\Delta_N^{\Lambda}=\Delta_N^i$, and consequently \eqref{eq:302} implies
\eqref{eq:17} as desired. 

The function $\Theta(\xi)\eta_N(\xi)$ is regarded as a periodic function on $\TT^d$, thus
\begin{align*}
  \Delta_N^{\Lambda}(\xi)&=\sum_{a/q\in \mathscr{U}_N^\Lambda}\Theta(\xi-a/q)\eta_N(\xi-a/q)\\
  &= \sum_{b\in\NN_{Q_0}^d}\sum_{a/w\in
    \mathscr{W}_N}\Theta(\xi-b/Q_0-a/w)\eta_N(\xi-b/Q_0-a/w)
\end{align*}
where we have used the fact, that if $(q_1, q_2)=1$ then for every
$a\in\ZZ^d$, there are unique $a_1, a_2\in\ZZ^d$, such that $a_1/q_1, a_2/q_2\in[0, 1)^d$ and
\begin{align}
  \label{eq:343}
\frac{a}{q_1q_2}=\frac{a_1}{q_1}+\frac{a_2}{q_2} \pmod{\ZZ^d}.  
\end{align}
Since $\Lambda$ has $\mathcal O$ property then according to
Definition \ref{def:1} there is an integer $1\le k\le 2/\rho+1$ and
there are sets $S_1, \ldots, S_k$ such that for any $j\in\NN_k$ we
have $S_j=\{q_{j, 1},\ldots,q_{j, \beta_j} \}$ for some $\beta_j\in\NN$. 

 Now for each $j\in\NN_k$ we introduce 
\[
\mathcal U_{\{j\}}=\big\{a_{j, s}/q_{j, s}\in\TT^d\cap \QQ^d: s\in\NN_{\beta_j}\ \text{and}\ a_{j, s}\in A_{q_{j, s}}\big\}
\] 
and for any $M=\{j_1, \ldots, j_m\}\subseteq \NN_k$ let
\[
\mathcal U_M=\big\{u_{j_1}+\ldots+u_{j_m}\in\TT^d\cap \QQ^d:
u_{j_l}\in \mathcal U_{\{j_l\}}\ \text{for any}\ l\in\NN_m\big\}.
\]
For any sequence $\sigma=(s_{j_1}, \ldots, s_{j_m})\in
\NN_{\beta_{j_1}}\times\ldots\times\NN_{\beta_{j_m}}$ determined by
the set $M$, let us define 
\[
\mathcal V_M^{\sigma}=\big\{a_{j_1, s_{j_1}}/q_{j_1, s_{j_1}}+\ldots+
a_{j_m, s_{j_m}}/q_{j_m, s_{j_m}}\in\TT^d\cap \QQ^d: a_{j_l, s_{j_l}}\in A_{q_{j_l, s_{j_l}}}\ \text{for any}\ l\in\NN_m\big\}.
\]
Note that $\mathcal V_M^{\sigma}$ is a subset of $\mathcal U_M$ with
fixed denominators $q_{j_1, s_{j_1}},\ldots,q_{j_m, s_{j_m}}$. If
$M=\emptyset$ then we have $\mathcal U_M=\mathcal V_M=\{0\}$. Let
\[
\chi(\xi)=\ind{\Lambda}(\xi) \qquad \text{and} \qquad \Omega_N(\xi)=\Theta(\xi)\eta_N(\xi).
\]
Then again by \eqref{eq:343} we obtain
\begin{multline}
\label{eq:13}
  \Delta_N^\Lambda(\xi)=
\sum_{a/w\in \mathscr{W}_N}\sum_{b\in\NN_{Q_0}^d}\Theta(\xi-b/Q_0-a/w)\eta_N(\xi-b/Q_0-a/w)\\
=\sum_{s_1\in\NN_{\beta_1}}\sum_{a_{1, s_1}\in A_{q_{1, s_1}}}\ldots\sum_{s_k\in\NN_{\beta_k}}
\sum_{a_{k, s_k}\in A_{q_{k, s_k}}}
m_{a_{1, s_1}/q_{1, s_1}+\ldots+a_{k, s_k}/q_{k, s_k} }(\xi)=\sum_{u\in \mathcal U_{\NN_k}}m_{u}(\xi)
\end{multline}
where 
\begin{align}
\label{eq:27}
  m_{u}(\xi)=m_{a_{1, s_1}/q_{1, s_1}+\ldots+a_{k, s_k}/q_{k, s_k} }(\xi)=\chi(q_{1, s_1}\cdot\ldots\cdot q_{k,
    s_k})\sum_{b\in\NN_{Q_0}^d}
  \Omega_N\Big(\xi-b/Q_0-\sum_{j=1}^ka_{j, s_j}/q_{j, s_j}\Big)
\end{align}
 for $u= a_{1, s_1}/q_{1, s_1}+\ldots+a_{k, s_k}/q_{k, s_k}$.

From now on we will write, for every $u\in \mathcal
U_{\NN_k}$, that
\begin{align}
  \label{eq:25}
f_u(x)=\mathcal F^{-1}\big(m_u\hat{f}\big)(x)
\end{align}
with $f\in\ell^{2r}\big(\ZZ^d\big)$ and $r\in\NN$.
Therefore, 
\begin{align}
\label{eq:135}
   \mathcal F^{-1}\big(\Delta_N^{\Lambda}\hat{f}\big)(x)=\sum_{u\in \mathcal U_{\NN_k}}f_u(x)
\end{align}
and 
the proof of inequality  \eqref{eq:302} will follow from the
following.
\begin{theorem}
  \label{thm:500}
Suppose that $\rho>0$ and $r\in\NN$ are given. Then there is a
constant $C_{\rho, r}>0$ such that for any $N>8^{2r/\rho}$ and for any
set $\Lambda$ as in \eqref{eq:300}
and for every $f\in\ell^{2r}\big(\ZZ^d\big)$ we have  
\begin{align}
  \label{eq:677}
  \Big\|\sum_{u\in \mathcal U_{\NN_k}}f_u\Big\|_{\ell^{2r}}\le C_{\rho, r}\|f\|_{\ell^{2r}}.
\end{align}
Moreover, the integer $k\in\NN_D$, the set $\mathcal U_{\NN_k}$ and consequently  the sets $S_1,\ldots, S_k$
 are determined by the set $\Lambda$ as it was described  above.
\end{theorem}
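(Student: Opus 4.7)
The plan is to prove Theorem \ref{thm:500} by induction on the parameter $k$ coming from the $\mathcal{O}$ property of $\Lambda$, combining the coprimality condition (iv) in Definition \ref{def:1} with the convolution factorisation $\eta = \phi * \psi$ provided by Remark \ref{rem:1}.

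For the base case $k=0$ we have $\Lambda=\{1\}$ and $\mathcal{U}_{\NN_0}=\{0\}$, so the claim reduces to the $\ell^{2r}$ boundedness of the single multiplier
\[
m_0(\xi) = \sum_{b \in \NN_{Q_0}^d} \Omega_N(\xi - b/Q_0),
\]
whose bumps, of width $\lesssim e^{-N^{2\rho}}$, are centred at rationals with denominator dividing the fixed integer $Q_0$. Since $1/Q_0$ is vastly larger than the support radius of $\eta_N$, these bumps are pairwise disjoint modulo $\ZZ^d$, and a Magyar--Stein--Wainger sampling/transference argument upgrades the assumption \eqref{eq:155} on $\Theta$ to the required $\ell^{2r}$ bound with constant $\lesssim \boldA_{2r}$.

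For the inductive step I would write each $u \in \mathcal{U}_{\NN_k}$ uniquely as $u = u' + v$ with $u' \in \mathcal{U}_{\NN_{k-1}}$ and $v \in \mathcal{U}_{\{k\}}$, uniqueness coming from the coprimality in (iv) together with the Chinese remainder identity \eqref{eq:343}. Using $\eta = \phi * \psi$, I split $\eta_N$ into a convolution of two narrower bumps and attach one factor to the new variable $v$ and the other to the inductive variable $u'$, producing a factorisation of the form
\[
\sum_{u \in \mathcal{U}_{\NN_k}} f_u \;=\; \sum_{v \in \mathcal{U}_{\{k\}}} \mathcal{F}^{-1}\bigl(\nu_N^v \cdot \widehat{F_v}\bigr),
\]
where $F_v$ depends only on the variables from $\mathcal{U}_{\NN_{k-1}}$ (and is therefore controlled by the inductive hypothesis), while $\nu_N^v$ is a bump attached to $v$. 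The outer sum over $v$ is then handled by opening the $2r$-th power as $\|g\|_{\ell^{2r}}^{2r} = \|g^r\|_{\ell^2}^2$ and applying Plancherel: the $r$-fold product on the spatial side becomes an $r$-fold convolution of symbols on the frequency side, and the coprimality of the denominators $q_{k,s_k}$ (prime powers drawn from $S_k$), combined with the hypothesis $N > 8^{2r/\rho}$, guarantees sufficient spectral separation for Parseval to collapse the sum with a constant depending only on $\rho$ and $r$.

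The main obstacle is the inductive step itself: one must execute the convolution factorisation of $\Omega_N$ in a way that is compatible with the CRT structure of the rationals in $\mathcal{U}_{\NN_k}$, and simultaneously propagate the $\ell^{2r}$ bound through the $r$-fold convolution without accumulating constants that blow up with $k$. Since $k$ may be as large as $D \sim 1/\rho$, the final constant must depend only on $\rho$ and $r$; achieving this requires uniform bookkeeping at every level of the induction, together with systematic use of the fact that the bumps $\eta_N$ are much narrower than the minimal spacing between distinct rationals in $\mathcal{U}_{\NN_k}$, so that disjoint-support Plancherel arguments remain available throughout.
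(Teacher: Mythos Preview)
Your inductive step has a genuine gap. The factorisation you describe cannot be produced from $\eta=\phi*\psi$ in the way you claim. On the frequency side, $\eta_N(\xi-u'-v)$ is a single bump centred at $u'+v$; the convolution identity $\eta=\phi*\psi$ does not let you write it as a product of a bump centred at $u'$ times a bump centred at $v$. On the spatial side one does get a pointwise product $\mathcal F^{-1}\eta_N=\mathcal F^{-1}\phi_N\cdot\mathcal F^{-1}\psi_N$, but $f_u$ is a \emph{convolution} of $f$ with the kernel $\mathcal F^{-1}m_u$, and convolution against a product kernel does not split as a composition of two operators that could be handled separately. More seriously, $m_u$ carries the factor $\Theta(\xi-b/Q_0-u)$, and $\Theta$ is an arbitrary multiplier satisfying only \eqref{eq:155}; it admits no factorisation whatsoever, so there is no way to ``attach one factor to $v$ and the other to $u'$'' while keeping $\Theta$ in the picture. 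Finally, the cutoff $\chi(q_{1,s_1}\cdots q_{k,s_k})$ in \eqref{eq:27} genuinely couples all $k$ coordinates (a subset of a set with $\mathcal O$ property need not be a full product set), so the inner sum over $u'\in\mathcal U_{\NN_{k-1}}$ is not of the form to which your inductive hypothesis applies.

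The paper does not argue by induction on $k$. Instead it first proves a square-function inequality (Theorem \ref{thm:5}) that dominates $\big\|\sum_u f_u\big\|_{\ell^{2r}}^{2r}$ by a sum over \emph{all} subsets $M\subseteq\NN_k$ of the square functions $\mathcal S_{M,M}^\sigma$ from \eqref{eq:44}; this reduction is obtained by iterating two recursive inequalities \eqref{eq:119}--\eqref{eq:129} imported from \cite{mst1}. Only after this decomposition is the multiplier $\Theta$ removed, all at once, by the transference step of Proposition \ref{prop:5}. The convolution identity $\eta=\phi*\psi$ is used only at the very end, not to separate variables but to apply Cauchy--Schwarz and reduce the $\ell^1$ kernel bound \eqref{eq:148} to two $\ell^2$ estimates \eqref{eq:22}--\eqref{eq:15}, each handled by Plancherel and the disjointness of the shifted supports. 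Your $r$-fold Plancherel/coprimality idea for the outer sum over $v$ is in the right spirit (it is the superorthogonality underlying the Ionescu--Wainger argument), but it cannot be run until $\Theta$ has been stripped away, and that requires the square-function machinery rather than the factorisation you propose.
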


 The
estimate \eqref{eq:677} will follow from Theorem \ref{thm:5} and
Proposition \ref{prop:5} formulated below. 
Let us introduce a suitable square function which will
be useful in bounding  \eqref{eq:677}. 
For any $M\subseteq \NN_k$ and $L=\{j_1, \ldots, j_l\}\subseteq M$ and
any sequence
  $\sigma=(s_{j_1},\ldots,s_{j_l})\in
  \NN_{\beta_{j_1}}\times\ldots\times\NN_{\beta_{j_l}}$ determined by the
  set $L$ let
us define the following square function 
$\mathcal S_{L, M}^{\sigma}\big(f_u:u\in\mathcal
  U_{\NN_k}\big)$  
associated with the sequence $\big(f_u:u\in\mathcal
  U_{\NN_k}\big)$ of complex-valued functions as in \eqref{eq:25}, by setting

\begin{align}
  \label{eq:44}
  \mathcal S_{L, M}^{\sigma}\big(f_u(x):u\in\mathcal
  U_{\NN_k}\big)=\bigg(
\sum_{w\in\mathcal U_{M^c}}\Big|\sum_{u\in\mathcal U_{M\setminus
    L}}\sum_{v\in\mathcal V_{L}^{\sigma}}f_{w+u+v}(x)\Big|^2\bigg)^{1/2},
\end{align}
where $M^c=\NN_k\setminus M$.
For some $s_{j_i}\in\{s_{j_1}, \ldots, s_{j_l}\}$ we will write
\begin{align*}
  \big\|\mathcal S_{L, M}^{\sigma}\big(f_u(x):u\in\mathcal
  U_{\NN_k}\big)\big\|_{\ell^2_{s_{j_i}}}=\bigg(\sum_{s_{j_i}\in\NN_{\beta_{j_i}}}
\big|\mathcal S_{L, M}^{(s_{j_1},\ldots,s_{j_l})}\big(f_u:u\in\mathcal
  U_{\NN_k}\big)(x)\big|^2\bigg)^{1/2}
\end{align*}
which defines some function which depends on $x\in\ZZ^d$ and on each
$s_{j_n}\in \{s_{j_1}, \ldots, s_{j_l}\}\setminus\{s_{j_i}\}$.

 For the proof of
\eqref{eq:677} we have to exploit the fact that the Fourier transform
of $f_u$ is defined as a sum of disjointly supported smooth cut-off
functions. Then appropriate subsums of
$\sum_{u\in\mathcal U_{\NN_k}}f_u$ should be strongly orthogonal to
each other. 

Theorem \ref{thm:500} will be proved as a consequence of Theorem
\ref{thm:5} and Proposition \ref{prop:5} below.  

\begin{theorem}
  \label{thm:5}
Suppose that $\rho>0$ and $r\in\NN$ are given. Then there is a
constant $C_{\rho, r}>0$ such that for any $N>8^{2r/\rho}$ and for any
set $\Lambda$ as in \eqref{eq:300}
and for every $f\in\ell^{2r}\big(\ZZ^d\big)$ we have  
\begin{align}
  \label{eq:130}
  \Big\|\sum_{u\in\mathcal U_{\NN_k}}
  f_{u}\Big\|_{\ell^{2r}}^{2r}
\le C_{\rho, r}\sum_{\atop{M\subseteq \NN_k}{M=\{j_1, \ldots,
    j_m\}}}\sum_{\sigma\in\NN_{\beta_{j_1}}\times\ldots\times\NN_{\beta_{j_m}}}
\big\|\mathcal S_{M, M}^{\sigma}\big(f_u:u\in\mathcal
  U_{\NN_k}\big)\big\|_{\ell^{2r}}^{2r}.
\end{align}
Moreover, the integer $k\in\NN_D$, the set $\mathcal U_{\NN_k}$ and consequently  the sets $S_1,\ldots, S_k$
 are determined by the set $\Lambda$ as it was described  above the
 formulation of
 Theorem \ref{thm:500}.
\end{theorem}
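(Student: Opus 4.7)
My plan is to derive Theorem \ref{thm:5} by expanding the even power $\|\sum_u f_u\|_{\ell^{2r}}^{2r}$ via Plancherel and then organizing the resulting $2r$-tuples according to the combinatorial structure built into the $\mathcal O$-property of $\Lambda$.

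First I would write
\[
\Big\|\sum_{u\in\mathcal U_{\NN_k}} f_u\Big\|_{\ell^{2r}}^{2r}
= \sum_{\mathbf u,\, \mathbf v \in \mathcal U_{\NN_k}^r}\; \sum_{x\in\ZZ^d}
\prod_{i=1}^r f_{u_i}(x)\prod_{i=1}^r \overline{f_{v_i}(x)}
\]
and note from \eqref{eq:27} that each multiplier $m_u$ is supported in a disjoint union of $Q_0^d$ boxes centered at $u + b/Q_0$ of radius $\lesssim \varepsilon_\gamma \le e^{-N^{2\rho}}$. Since $\log Q_0 \lesssim N^{\rho/2}\log N$ and $Q_0$ is coprime to every prime appearing in the denominators of $\mathcal U_{\NN_k}$, the hypothesis $N > 8^{2r/\rho}$ guarantees that $r\varepsilon_\gamma$ is far smaller than the minimal separation between distinct translates $\sum_i u_i + \sum_i b_i/Q_0 \pmod{\ZZ^d}$. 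The Plancherel identity then forces the inner sum over $x$ to vanish unless $\sum_i u_i \equiv \sum_i v_i \pmod{\ZZ^d}$.

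Next I would invoke property (iv) of Definition \ref{def:1}: for distinct $(j,s)$ the prime-power denominators $q_{j,s}$ are pairwise coprime. Writing $u = u^{(1)}+\cdots+u^{(k)}$ with $u^{(j)} \in \mathcal U_{\{j\}}$, uniqueness of the partial-fraction decomposition converts the constraint above into $k$ independent constraints $\sum_i u_i^{(j)} \equiv \sum_i v_i^{(j)} \pmod{\ZZ^d}$, one per direction. Within each direction $j$, grouping indices by their label $s_{i,j} \in \NN_{\beta_j}$ further reduces the constraint to congruences on label-aggregated numerators modulo each $q_{j,s}$.

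Finally, I would classify each admissible tuple $(\mathbf u, \mathbf v)$ by a pair $(M,\sigma)$ where $M \subseteq \NN_k$ consists of the directions in which no ``individual pairing'' $u_i^{(j)} = v_i^{(j)}$ is forced and $\sigma = (\sigma_j)_{j\in M}$ records the common label-type along $M$. On the directions in $M^c$ the pairing collapses the sum to one over $w \in \mathcal U_{M^c}$; on $M$ the numerators range over $\mathcal V_M^\sigma$. A Cauchy--Schwarz application in the free $w$-sum then regroups the resulting expression as $\|\mathcal S_{M,M}^\sigma\|_{\ell^{2r}}^{2r}$ from \eqref{eq:44}. The main obstacle is the combinatorial bookkeeping: I must verify that every admissible tuple is assigned to some $(M,\sigma)$ with multiplicity bounded by $C_{\rho,r}$ (using permutation symmetry among the $r$ copies), and that the Cauchy--Schwarz step correctly absorbs all mixed label patterns without blowing up the constants.
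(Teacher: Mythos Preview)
Your approach differs from the paper's, and there is a genuine gap in your classification step.

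The paper does \emph{not} expand the $2r$-th power and sort tuples.  Instead it argues by recursion on the intermediate square functions $\mathcal S_{L,M}^\sigma$ defined in \eqref{eq:44}.  Starting from $\mathcal S_{\emptyset,\NN_k}=\big|\sum_{u}f_u\big|$, it invokes two inequalities whose proofs are imported from \cite{mst1}: for any $j_n\in M\setminus L$,
\[
\big\|\mathcal S_{L,M}^\sigma\big\|_{\ell^{2r}}^{2r}
\le C_r\sum_{s_{j_n}\in\NN_{\beta_{j_n}}}\big\|\mathcal S_{L\cup\{j_n\},M}^{\sigma\oplus s_{j_n}}\big\|_{\ell^{2r}}^{2r}
+C_r\big\|\mathcal S_{L,M\setminus\{j_n\}}^\sigma\big\|_{\ell^{2r}}^{2r}.
\]
Iterating this dichotomy $k$ times (each step either enlarges $L$ by one index with a fixed label, or removes that index from $M$) terminates with $L=M$ for every branch and produces precisely the right-hand side of \eqref{eq:130}.

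Your Plancherel/support step is fine and, after CRT, does give the constraints $\sum_i u_i^{(j)}\equiv\sum_i v_i^{(j)}$ for each $j\in\NN_k$.  But these are constraints on \emph{sums}: nothing in them forces an individual pairing $u_i^{(j)}=v_i^{(j)}$, so your description of $M^c$ as ``directions in which individual pairing is forced'' is vacuous.  More seriously, in each term $\|\mathcal S_{M,M}^\sigma\|_{\ell^{2r}}^{2r}$ on the right of \eqref{eq:130} \emph{all $2r$ entries} $u_i^{(j)},v_i^{(j)}$ with $j\in M$ carry the \emph{same} label $\sigma_j$, and for $j\in M^c$ one has $u_i^{(j)}=v_i^{(j)}=w_i^{(j)}$ for each $i$.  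A generic tuple satisfying the Plancherel constraint has neither a common label in any direction nor matched $M^c$-parts index by index, so it cannot be assigned to any $(M,\sigma)$ in your scheme.  A single Cauchy--Schwarz in the $w$-variable does not manufacture this structure; reducing mixed-label tuples to fixed-$\sigma$ terms is exactly what the paper's recursion accomplishes one direction at a time, and the substance lives in the cited inequalities \eqref{eq:119}--\eqref{eq:129} from \cite{mst1}.
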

\begin{proof}

Under the assumptions of Theorem \ref{thm:500}, there is a constant
$C_r>0$ such that for any $M\subseteq\NN_k$ and $L=\{j_1,\ldots,
j_l\}\subseteq M$ and $j_n\in M\setminus L$ and for any
$\sigma=(s_{j_1}, \ldots,
s_{j_l})\in\NN_{\beta_{j_1}}\times\ldots\times\NN_{\beta_{j_l}}$
determined by the set $L$ we have
\begin{align}
  \label{eq:119}
 \big\| \mathcal S_{L, M}^{\sigma}\big(f_u:u\in\mathcal
  U_{\NN_k}\big)\big\|_{\ell^{2r}}\le C_r\Big\|\big\|\mathcal
  S_{L\cup\{j_n\}, M}^{\sigma\oplus s_{j_n}}\big(f_u:u\in\mathcal
  U_{\NN_k}\big)\big\|_{\ell^2_{s_{j_n}}}\Big\|_{\ell^{2r}}
\end{align}
where $\sigma\oplus s_{j_n}=(s_{j_1},\ldots,s_{j_l},
s_{j_n})\in
\NN_{\beta_{j_1}}\times\ldots\times\NN_{\beta_{j_l}}\times\NN_{\beta_{j_n}}$
is the sequence determined by the set
$L\cup\{s_{j_n}\}$. Moreover, the right-hand side in \eqref{eq:119}
can be controlled in the following way
\begin{multline}
  \label{eq:129}
  \Big\|\big\|\mathcal
  S_{L\cup\{j_n\}, M}^{\sigma\oplus s_{j_n}}\big(f_u:u\in\mathcal
  U_{\NN_k}\big)\big\|_{\ell^2_{ s_{j_n}}}\Big\|_{\ell^{2r}}^{2r}
\le C_r\sum_{s_{j_n}\in\NN_{\beta_{j_n}}}
 \big\|\mathcal S_{L\cup\{j_n\}, M}^{\sigma\oplus s_{j_n}}\big(f_u:u\in\mathcal
  U_{\NN_k}\big)\big\|_{\ell^{2r}}^{2r}\\
+C_r\big\|\mathcal S_{L, M\setminus\{j_n\}}^{\sigma}\big(f_u:u\in\mathcal
  U_{\NN_k}\big)\big\|_{\ell^{2r}}^{2r}.
\end{multline}
The proof of \eqref{eq:119} and \eqref{eq:129} can be found in
\cite{mst1}. Therefore, \eqref{eq:119} combined with 
  \eqref{eq:129} yields
\begin{align}
  \label{eq:1190}
\begin{split}
 \big\| \mathcal S_{L, M}^{\sigma}\big(f_u:u\in\mathcal
  U_{\NN_k}\big)\big\|_{\ell^{2r}}^{2r}&\le 
C_r\sum_{s_{j_n}\in\NN_{\beta_{j_n}}}
 \big\|\mathcal S_{L\cup\{j_n\}, M}^{\sigma\oplus s_{j_n}}\big(f_u:u\in\mathcal
  U_{\NN_k}\big)\big\|_{\ell^{2r}}^{2r}\\
&+C_r\big\|\mathcal S_{L, M\setminus\{j_n\}}^{\sigma}\big(f_u:u\in\mathcal
  U_{\NN_k}\big)\big\|_{\ell^{2r}}^{2r}.
\end{split}
\end{align}
Applying \eqref{eq:1190} recursively we obtain 

\begin{multline}
  \label{eq:131}
\Big\|\sum_{u\in\mathcal U_{\NN_k}}
  f_{u}\Big\|_{\ell^{2r}}^{2r}=\big\|\mathcal S_{\emptyset,
    \NN_k}\big(f_u: u\in\mathcal
  U_{\NN_k}\big)\big\|_{\ell^{2r}}^{2r}\\
\lesssim _r  
\sum_{s_{k}\in\NN_{\beta_{j_k}}}
 \big\|\mathcal S_{\{k\}, \NN_k}^{(s_{k})}\big(f_u:u\in\mathcal
  U_{\NN_k}\big)
\big\|_{\ell^{2r}}^{2r}
+\big\|\mathcal S_{\emptyset, \NN_{k-1}}\big(f_u:u\in\mathcal
  U_{\NN_k}\big)
\big\|_{\ell^{2r}}^{2r}\\
\lesssim_r
\sum_{s_{k-1}\in\NN_{\beta_{j_{k-1}}}}\sum_{s_{k}\in\NN_{\beta_{j_k}}}
 \big\|\mathcal S_{\{k-1, k\}, \NN_k}^{(s_{k-1}, s_{k})}\big(f_u:u\in\mathcal
  U_{\NN_k}\big)
\big\|_{\ell^{2r}}^{2r}+
\sum_{s_{k}\in\NN_{\beta_{j_k}}}
 \big\|\mathcal S_{\{k\}, \NN_{k}\setminus\{k-1\}}^{(s_{k})}\big(f_u:u\in\mathcal
  U_{\NN_k}\big)
\big\|_{\ell^{2r}}^{2r}\\
+\sum_{s_{k-1}\in\NN_{\beta_{j_{k-1}}}}
 \big\|\mathcal S_{\{k-1\}, \NN_{k-1}}^{(s_{k-1})}\big(f_u:u\in\mathcal
  U_{\NN_k}\big)
\big\|_{\ell^{2r}}^{2r}
+\big\|\mathcal S_{\emptyset, \NN_{k-2}}\big(f_u:u\in\mathcal
  U_{\NN_k}\big)
\big\|_{\ell^{2r}}^{2r}\\
\lesssim_r\ldots\lesssim_{\rho, r}\sum_{\atop{M\subseteq \NN_k}{M=\{j_1, \ldots,
    j_m\}}}\sum_{\sigma\in\NN_{\beta_{j_1}}\times\ldots\times\NN_{\beta_{j_m}}}
\sum_{x\in\ZZ^d}\bigg(\sum_{w\in\mathcal
  U_{M^c}}\Big|\sum_{v\in\mathcal V_{M}^{\sigma}}f_{w+v}(x)\Big|^2\bigg)^r.
\end{multline}
 The proof of \eqref{eq:130} is completed.
\end{proof}
\subsection{Concluding remarks and the proof of Theorem \ref{thm:500}}
Now Theorem \ref{thm:5} reduces the proof of inequality \eqref{eq:677}
to showing the following estimate 
\begin{align}
  \label{eq:134}
\sum_{\atop{M\subseteq \NN_k}{M=\{j_1, \ldots,
    j_m\}}}\sum_{\sigma\in\NN_{\beta_{j_1}}\times\ldots\times\NN_{\beta_{j_m}}}
\big\|\mathcal S_{M, M}^{\sigma}\big(f_u:u\in\mathcal
  U_{\NN_k}\big)\big\|_{\ell^{2r}}^{2r}\lesssim_r\|f\|_{\ell^{2r}}^{2r}
\end{align}
for any
$f\in\ell^{2r}\big(\ZZ^d\big)$ which is a
characteristic function of a finite set in $\ZZ^d$.
Firstly, we prove the following.
\begin{proposition}
  \label{prop:5}
Under the assumptions of Theorem \ref{thm:500}, 
there exists a constant $C_{\rho, r}>0$
  such that for any $M=\{j_1,\ldots, j_m\}\subseteq\NN_{k}$ any $\sigma=(s_{j_1},\ldots,
s_{j_m})\in\NN_{\beta_{j_1}}\times\ldots\times\NN_{\beta_{j_m}}$
determined by the set $M$ and $f\in\ell^{2r}\big(\ZZ^d\big)$ we
have
\begin{equation}
  \label{eq:138}
  \big\|\mathcal S_{M,
    M}^{\sigma}\big(f_u:u\in\mathcal
  U_{\NN_k}\big)\big\|_{\ell^{2r}}
\le C_{\rho, r} \mathbf A_r \Big\|\mathcal S_{M,
    M}^{\sigma}\Big(\mathcal
  F^{-1}\Big(\sum_{b\in\NN_{Q_0}}\eta_N(\xi-b/Q_0-u)\hat{f}(\xi)\Big):u\in\mathcal
  U_{\NN_k}\Big)\Big\|_{\ell^{2r}}.
\end{equation}
\end{proposition}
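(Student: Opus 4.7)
The plan is to reduce \eqref{eq:138} to a vector-valued continuous Fourier multiplier bound for $\Theta$ via a Magyar--Stein--Wainger style sampling argument. For $w\in\mathcal U_{M^c}$ I would put $F_w(x)=\sum_{v\in\mathcal V_M^{\sigma}}f_{w+v}(x)$ and $G_w(x)=\sum_{v\in\mathcal V_M^{\sigma}}g_{w+v}(x)$, where $g_u=\calF^{-1}\big(\sum_{b\in\NN_{Q_0}^d}\eta_N(\cdot-b/Q_0-u)\hat f\big)$. Since $L=M$ here, both sides of \eqref{eq:138} take the form $\|(\cdot)_w\|_{\ell^{2r}(\ZZ^d;\ell^2(\mathcal U_{M^c}))}$, so it suffices to show that the vector-valued operator $(G_w)_w\mapsto(F_w)_w$ has operator norm at most $C_{\rho,r}\boldA_r$ on this space.

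On the Fourier side one has
$$\hat F_w(\xi)=\chi(q_{1,s_1}\cdots q_{k,s_k})\sum_{v,b}\Theta(\xi-b/Q_0-w-v)\,\eta_N(\xi-b/Q_0-w-v)\hat f(\xi),$$
whereas $\hat G_w(\xi)$ is the analogous expression without the factors $\chi$ and $\Theta$. The assumption $N>8^{2r/\rho}$, combined with $\varepsilon_\gamma\leq e^{-N^{2\rho}}$ and the upper bound $Q_0\prod_j q_{j,s_j}\leq e^{2N^\rho}$ on the denominators appearing in $\mathcal U_{\NN_k}$, forces the bumps $\eta_N(\cdot-b/Q_0-w-v)$ to have pairwise disjoint supports on $\TT^d$ as $(b,v)$ varies. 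Using the factorization $\eta=\phi\ast\psi$ from Remark \ref{rem:1} I would then produce an auxiliary bump $\tilde\eta_N$ supported in a slightly larger set but still much smaller than the gaps between the relevant rationals, with $\tilde\eta_N\equiv 1$ on $\mathrm{supp}(\eta_N)$. Consequently $F_w=\calF^{-1}(\Psi_w\,\hat G_w)$ for the explicit symbol $\Psi_w(\xi)=\chi(\cdots)\sum_{v,b}\Theta(\xi-b/Q_0-w-v)\tilde\eta_N(\xi-b/Q_0-w-v)$.

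The core of the proof is a vector-valued Magyar--Stein--Wainger sampling principle: the $\ell^{2r}(\ZZ^d;\ell^2(\mathcal U_{M^c}))$-norm of the discrete operator with symbol $(\Psi_w)_w$ is controlled by the $L^{2r}(\RR^d;\ell^2(\mathcal U_{M^c}))$-norm of the corresponding continuous vector-valued multiplier, which, after translating each of the disjoint balls to the origin, reduces to componentwise multiplication by $\Theta$. By a standard Khintchine/Rademacher randomization, the scalar $L^{2r}(\RR^d)$-boundedness of $\Theta$ from \eqref{eq:155} with constant $\boldA_r$ lifts to the Hilbert-valued $L^{2r}(\RR^d;\ell^2)$-boundedness with constant $\leq C_r\boldA_r$. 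Combining these two steps yields \eqref{eq:138}.

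The main obstacle is carrying out the sampling step rigorously and uniformly in $w\in\mathcal U_{M^c}$: one must verify that the continuous-to-discrete transfer is exact, not merely approximate, across all the disjoint balls simultaneously, and that it respects the vector-valued $\ell^2(\mathcal U_{M^c})$-structure. The factorization $\eta=\phi\ast\psi$ from Remark \ref{rem:1} is the critical ingredient here, as it lets one split the bumps $\eta_N$ into two factors localized at a scale well below the minimum gap between the rationals $a/q\in\mathscr U_N^\Lambda$, which is precisely what is needed to make the Fourier-side identification between $\ZZ^d$ and $\RR^d$ hold exactly on each ball. Granted this transfer, the remainder is the routine lift of scalar $L^p$-multiplier bounds to Hilbert-valued ones.
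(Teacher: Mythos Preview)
Your randomization step for the $\ell^2(\mathcal U_{M^c})$ structure is right and matches what the paper does (via the Gaussian identity \eqref{eq:1411} in place of Rademacher variables, but the two are interchangeable). The gap is earlier, in how you treat the inner sum over $(v,b)\in\mathcal V_M^\sigma\times\NN_{Q_0}^d$. After your sampling step you need the continuous $L^{2r}(\RR^d;\ell^2)$ bound for the componentwise multiplier $\Psi_w(\xi)=\sum_{v,b}\Theta(\xi-a_{v,b})\tilde\eta_N(\xi-a_{v,b})$, and you assert that ``translating each of the disjoint balls to the origin'' reduces this to multiplication by $\Theta$. It does not: a Fourier translation is a physical-space modulation, so on the piece of $G_w$ with Fourier support near $a_{v,b}$ the operator acts as $M_{a_{v,b}}T_\Theta M_{-a_{v,b}}$, and the \emph{sum} $\sum_{v,b}M_{a_{v,b}}T_\Theta M_{-a_{v,b}}P_{v,b}$ is not controlled by $C\boldA_{2r}$ on $L^{2r}$ for $r>1$. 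Disjoint Fourier supports give no orthogonality away from $L^2$, and the number of terms $Q_0^d\,|\mathcal V_M^\sigma|$ is enormous, so there is no reason for $\big\|\sum_{v,b}T_{\Theta(\cdot-a_{v,b})}h_{v,b}\big\|_{L^{2r}}\le C\boldA_{2r}\big\|\sum_{v,b}h_{v,b}\big\|_{L^{2r}}$ to hold.

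The paper's device is to exploit the arithmetic fact that all the shifts $b/Q_0+v$ share the common denominator $B_h=Q_0\prod_i q_{j_i,s_{j_i}}\le e^{N^\rho}$. Decomposing $\ZZ^d$ into residue classes $x=B_hy+n$ makes every phase $e^{2\pi i x\cdot(b/Q_0+v)}$ depend only on $n$, so on each coset the entire $(v,b)$-sum collapses into a \emph{single} multiplier $\Theta\eta_N$ acting on the auxiliary function $G(\cdot;n,w)$ of \eqref{eq:137}. Only at that point does the scalar transference (Proposition~4.5 of \cite{mst1}, see \eqref{eq:140}) apply to remove the factor $\Theta$ coset by coset; one then reassembles and undoes the Gaussian linearization as in \eqref{eq:142}. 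This coset reduction is the missing idea in your sketch, and it is also why you misidentified the main obstacle: the sampling itself is routine, the hard part is arranging that only one copy of $\Theta$ appears before you sample.

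A minor point: the factorization $\eta=\phi*\psi$ from Remark~\ref{rem:1} is not used in this proposition at all (and does not produce a bump $\tilde\eta_N\equiv 1$ on $\mathrm{supp}\,\eta_N$ in any case). The paper invokes it only later, in the proof of Theorem~\ref{thm:500}, to split an $\ell^1$ norm via Cauchy--Schwarz in \eqref{eq:22}--\eqref{eq:15}.
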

\begin{proof}
We assume, without of loss of generality, that $N\in\NN$ is large.
Let $B_h=q_{j_1, s_{j_1}}\cdot\ldots\cdot q_{j_m,
  s_{j_m}}\cdot Q_0\le e^{N^{\rho}}$
and observe that according to the notation from \eqref{eq:25} and \eqref{eq:13},  we
have
\begin{multline}
  \label{eq:136}
  \big\|\mathcal S_{M, M}^{\sigma}\big(f_u:u\in\mathcal
  U_{\NN_k}\big)\big\|_{\ell^{2r}}^{2r}=
\sum_{x\in\ZZ^d}\bigg(\sum_{w\in\mathcal
  U_{M^c}}\Big|\sum_{v\in\mathcal V_{M}^{\sigma}}f_{w+v}(x)\Big|^2\bigg)^r\\
\le\sum_{x\in\ZZ^d}\bigg(\sum_{w\in\mathcal
  U_{M^c}}\Big|\calF^{-1}\Big(\sum_{v\in\mathcal V_{M}^{\sigma}}\sum_{b\in\NN_{Q_0}}\Theta(\xi-b/Q_0-v-w)\eta_N(\xi-b/Q_0-v-w)\hat{f}(\xi)\Big)(x)\Big|^2\bigg)^r\\
=\sum_{n\in \NN_{B_h}^d}\sum_{x\in\ZZ^d}\bigg(\sum_{w\in\mathcal
  U_{M^c}}\big|\calF^{-1}\big(\Theta\eta_N G(\: \cdot\: ;n, w)\big)(B_hx+n)\big|^2\bigg)^r
\end{multline}
where 
\begin{align}
  \label{eq:137}
  G(\xi; n, w)=\sum_{v\in\mathcal V_{M}^{\sigma}}\sum_{b\in\NN_{Q_0}^d}\hat{f}(\xi+b/Q_0+v+w)e^{-2\pi i
(b/Q_0+v)\cdot n}.
\end{align}

We  know that for each $0<p<\infty$ there is a constant $C_p>0$ such
that for any $d\in\NN$ and $\lambda_1,\ldots,\lambda_d\in\CC^d$ we have
\begin{align}
  \label{eq:1411}
  \bigg(\int_{\CC^d}|\lambda_1z_1+\ldots+\lambda_dz_d|^pe^{-\pi
  |z|^2} {\rm d} z\bigg)^{1/p}=C_p\big(|\lambda_1|^2+\ldots+|\lambda_d|^2\big)^{1/2}.
\end{align}

By Proposition 4.5 from \cite{mst1}, with the sequence of multipliers
$\Theta_N=\Theta$ for all $N\in\NN$ and $\Theta$ as in \eqref{eq:155},   we have 
\begin{align}
\label{eq:140}
\big\|\calF^{-1}\big(\Theta\eta_N G(\: \cdot\: ;n,
w)\big)(B_hx+n)\big\|_{\ell^{2r}(x)}\le C_{\rho, r}\mathbf A_{2r}
\big\|\calF^{-1}\big(\eta_N G(\: \cdot\: ;n, w)\big)(B_hx+n)\big\|_{\ell^{2r}(x)}
\end{align}
since $\inf_{\gamma\in\Gamma}\varepsilon_{\gamma}^{-1}\ge
e^{N^{2\rho}}\ge 2e^{(d+1)N^{\rho}}\ge B_h$ for sufficiently large
$N\in\NN$. 

Therefore, combining \eqref{eq:140} with \eqref{eq:1411} we obtain that
\begin{multline}
  \label{eq:142}
\sum_{n\in \NN_{B_h}^d}\sum_{x\in\ZZ^d}\bigg(\sum_{w\in\mathcal
  U_{M^c}}\big|\calF^{-1}\big(\Theta\eta_N G(\: \cdot\: ;n, w)\big)(B_hx+n)\big|^2\bigg)^r
\\=C_{2r}^{2r}\int_{\CC^d}
\sum_{n\in \NN_{B_h}^d}\sum_{x\in\ZZ^d}\Big|\calF^{-1}\Big(\Theta\eta_N\Big(\sum_{w\in\mathcal
	U_{M^c}}z_{w} G(\: \cdot\: ;n, w)\Big)\Big)(B_hx+n)\Big|^{2r}e^{-\pi |z|^2}{\rm d}z\\
\lesssim_r\int_{\CC^d}
\sum_{n\in \NN_{B_h}^d}\sum_{x\in\ZZ^d}\Big|\calF^{-1}\Big(\sum_{w\in\mathcal
	U_{M^c}}z_{w} \eta_NG(\: \cdot\: ;n, w)\Big)(B_hx+n)\Big|^{2r}e^{-\pi |z|^2}{\rm d}z\\
\lesssim_r\sum_{n\in \NN_{B_h}^d}\sum_{x\in\ZZ^d}\bigg(\sum_{w\in\mathcal
  U_{M^c}}\big|\calF^{-1}\big(\eta_N G(\: \cdot\: ;n, w)\big)(B_hx+n)\big|^2\bigg)^r\\
\lesssim_r\sum_{x\in\ZZ^d}\bigg(\sum_{w\in\mathcal
  U_{M^c}}\Big|\calF^{-1}\Big(\sum_{v\in\mathcal V_{M}^{\sigma}}\sum_{b\in\NN_{Q_0}}\eta_N(\xi-b/Q_0-v-w)\hat{f}(\xi)\Big)(x)\Big|^2\bigg)^r.
\end{multline}
This completes the proof of Proposition \ref{prop:5}.
\end{proof}

Now we are able to finish the proof of Theorem
\ref{thm:500}. 
\begin{proof}[Proof of Theorem \ref{thm:500}]
  It remains to show that  there exists a constant $C_{\rho, r}>0$
  such that for any $M=\{j_1,\ldots, j_m\}\subseteq\NN_{k}$ any $\sigma=(s_{j_1},\ldots,
s_{j_m})\in\NN_{\beta_{j_1}}\times\ldots\times\NN_{\beta_{j_m}}$
determined by the set $M$ and $f\in\ell^{2r}\big(\ZZ^d\big)$ we
have
\begin{align}
  \label{eq:143}
\sum_{\sigma\in\NN_{\beta_{j_1}}\times\ldots\times\NN_{\beta_{j_m}}}\Big\|\mathcal S_{M,
    M}^{\sigma}\Big(\mathcal
  F^{-1}\Big(\sum_{b\in\NN_{Q_0}}\eta_N(\xi-b/Q_0-u)\hat{f}(\xi)\Big):u\in\mathcal
  U_{\NN_k}\Big)\Big\|_{\ell^{2r}}^{2r}\le C_{\rho, r}^{2r}\|f\|_{\ell^{2r}}.
\end{align}
Since there are $2^k$ possible choices of sets $M\subseteq \NN_k$ and $k\in\NN_D$
then \eqref{eq:134} will follow and the proof of Theorem \ref{thm:500}
will be completed. If $r=1$ the then Plancherel's theorem does the job
since the  functions $\eta_N(\xi-b/Q_0-v-w)$ are disjointly supported 
for all $b/Q_0\in \NN_{Q_0}$, $w\in\mathcal
  U_{M^c}$, $v\in\mathcal V_{M}^{\sigma}$ and $\sigma=(s_{j_1},\ldots,
s_{j_m})\in\NN_{\beta_{j_1}}\times\ldots\times\NN_{\beta_{j_m}}$. For
general $r\ge2$, since $\|f\|_{\ell^{2r}}^{2r}=\|f\|_{\ell^{2}}^2$
because we have assumed that $f$ is a characteristic function of a
finite set in $\ZZ^d$, it
suffices to prove for any $x\in\ZZ^d$ that
\begin{align}
  \label{eq:139}
  \sum_{w\in\mathcal
  U_{M^c}}\Big|\calF^{-1}\Big(\sum_{v\in\mathcal V_{M}^{\sigma}}\sum_{b\in\NN_{Q_0}}\eta_N(\xi-b/Q_0-v-w)\hat{f}(\xi)\Big)(x)\Big|^2\le
C_{\rho, r}.
\end{align}
In fact, since $\|f\|_{\ell^{\infty}}=1$, it is enough to show 
\begin{align}
  \label{eq:144}
  \Big\|\calF^{-1}\Big(\sum_{w\in\mathcal
  U_{M^c}}\alpha(w)\sum_{v\in\mathcal V_{M}^{\sigma}}\sum_{b\in\NN_{Q_0}}\eta_N(\xi-b/Q_0-v-w)\Big)\Big\|_{\ell^1}\le
C_{\rho, r}
\end{align}
for any sequence of complex numbers $\big(\alpha(w):  w\in\mathcal
  U_{M^c}\big)$ such that
  \begin{align}
    \label{eq:50}
    \sum_{w\in\mathcal
  U_{M^c}}|\alpha(w)|^2=1.
  \end{align}
Computing the Fourier transform we obtain 
\begin{multline}
  \label{eq:145}
  \calF^{-1}\Big(\sum_{w\in\mathcal
    U_{M^c}}\alpha(w)\sum_{v\in\mathcal V_{M}^{\sigma}}\sum_{b\in\NN_{Q_0}}\eta_N(\xi-b/Q_0-v-w)\Big)\\
  =\Big(\sum_{w\in\mathcal U_{M^c}}\alpha(w)e^{-2\pi i x\cdot
    w}\Big)\cdot \det(\mathcal E_N)\mathcal
  F^{-1}\eta\big(\mathcal E_Nx\big)\cdot\Big(\sum_{v\in\mathcal
    V_{M}^{\sigma}}\sum_{b\in\NN_{Q_0}}e^{-2\pi i
    x\cdot (b/Q_0+v)}\Big).
\end{multline}
The function 
\begin{align}
  \label{eq:146}
  \sum_{v\in\mathcal
    V_{M}^{\sigma}}\sum_{b\in\NN_{Q_0}}e^{-2\pi i
    x\cdot (b/Q_0+v)}
\end{align}
 can be written as a sum of $2^m$ functions 
\begin{align}
  \label{eq:147}
  \sum_{b\in\NN_{Q}}e^{-2\pi i
    x\cdot (b/Q)} =
\begin{cases} 
Q^d & \mbox{if } x \equiv 0 \pmod{Q}, \\
0   & \mbox{otherwise,} 
\end{cases}  
\end{align}
where possible values of $Q$ are products of $Q_0$ and $p_{j_i,
  s_{j_i}}^{\gamma_i}$ or $p_{j_i, s_{j_i}}^{\gamma_i-1}$ for
$i\in\NN_m$. Therefore, the proof of \eqref{eq:144} will be completed
if we show  that
\begin{align}
  \label{eq:148}
  \Big\|\Big(\sum_{w\in\mathcal U_{M^c}}\alpha(w)e^{-2\pi i Qx\cdot
    w}\Big)\cdot Q^d\det(\mathcal E_N)\mathcal
  F^{-1}\eta\big(Q\mathcal E_N x\big)\Big\|_{\ell^1(x)}\le C_{\rho, r}
\end{align}
 for any integer $Q\le e^{N^{\rho}}$ such that
$(Q, q_{j, s})=1$, for all $j\in M^c$ and $s\in\NN_{\beta_j}$. 

Recall that, according Remark \ref{rem:1}, in our case $\eta=\phi*\psi$ for some two smooth functions
$\phi, \psi$
supported in $(-1/(8d), 1/(8d))^d$.  Therefore, by the Cauchy--Schwarz inequality we only need to prove
that
\begin{align}
  \label{eq:22}
Q^{d/2}\det(\mathcal E_N)^{1/2}\big\|\mathcal
  F^{-1}\phi\big(Q\mathcal E_N x\big)\big\|_{\ell^2(x)}\le C_{\rho, r}  
\end{align}
and
\begin{align}
  \label{eq:15}
Q^{d/2}\det(\mathcal E_N)^{1/2}\Big\|\Big(\sum_{w\in\mathcal U_{M^c}}\alpha(w)e^{-2\pi i Qx\cdot
    w}\Big)\cdot\mathcal
  F^{-1}\psi\big(Q\mathcal E_N x\big)\Big\|_{\ell^2(x)}\le C_{\rho, r}.
\end{align}
Since $(Q, q_{j, s})=1$, for all $j\in M^c$ and $s\in\NN_{\beta_j}$
then $Qw\not\in \ZZ^d$ for any $w\in\mathcal U_{M^c}$  and its
denominator is bounded by $N^D$. We can assume, without of loss of
generality, that $Qw\in[0, 1)^d$ by the periodicity of  $x\mapsto e^{-2\pi i x\cdot
    Qw}$.
Inequality \eqref{eq:22} easily follows from Plancherel's theorem. In
order to prove \eqref{eq:15} observe that by the change of variables
one has
\begin{align*}
  \Big(\sum_{w\in\mathcal U_{M^c}}\alpha(w)e^{-2\pi i x\cdot
    Qw}\Big)\cdot\mathcal
  F^{-1}\psi\big(Q\mathcal E_N x\big)=Q^{-d}\det(\mathcal E_N)^{-1}
  \sum_{w\in\mathcal U_{M^c}}\alpha(w)\mathcal
  F^{-1}\big(\psi\big(Q^{-1}\mathcal E_N^{-1}(\: \cdot-Qw) \big)\big)(x).
\end{align*}
 Therefore, Plancherel's theorem and the last identity yield  
 \begin{multline}
   \label{eq:58}
   Q^{d}\det(\mathcal E_N)\Big\|\Big(\sum_{w\in\mathcal U_{M^c}}\alpha(w)e^{-2\pi i Qx\cdot
    w}\Big)\cdot\mathcal
  F^{-1}\psi\big(Q\mathcal E_N x\big)\Big\|_{\ell^2(x)}^2\\=
\sum_{w\in\mathcal
  U_{M^c}}|\alpha(w)|^2\int_{\RR^d}\big|\psi(\xi-\mathcal E_N^{-1}w)\big|^2{\rm
d}\xi+\sum_{\atop{w_1, w_2\in\mathcal
  U_{M^c}}{w_1\not=w_2}}\alpha(w_1)\overline{\alpha(w_2)}\int_{\RR^d}\psi(\xi)\psi\big(\xi-\mathcal E_N^{-1}(w_1-w_2)\big){\rm
d}\xi.
 \end{multline}
The first sum on the right-hand side of \eqref{eq:58} is bounded in
view of \eqref{eq:50}. The second one vanishes since the function
$\psi$ is supported in $(-1/(8d), 1/(8d))^d$ and
$|\mathcal E_N^{-1}(w_1-w_2)|_{\infty}\ge e^{N^{2\rho}}N^{-2D}>1$,
for sufficiently large $N$.
The proof of Theorem \ref{thm:500} is completed.
\end{proof}

\section{Proof of Theorem \ref{thm:7}}
\label{sec:4}
This section is intended to provide the proof of Theorem \ref{thm:7}. 
In fact, in view of the decomposition of the kernel $K$ into dyadic
pieces as in \eqref{eq:87}, instead
of inequality \eqref{eq:37}, it suffices to show that for every $p\in(1, \infty)$
there is a constant $C_p>0$ such that for all
$f\in\ell^p\big(\ZZ^d\big)$ we have
\begin{align}
  \label{eq:84}
\Big\|\sum_{n\in\ZZ}T_nf\Big\|_{\ell^p}\le C_p\|f\|_{\ell^p}  
\end{align}
where 
\begin{align}
  \label{eq:85}
T_n f(x)
=
\sum_{y\in\ZZ^k} f\big(x - \calQ(y)\big) K_n(y)    
\end{align}
with the kernel $K_n$ as in \eqref{eq:87} for each $n\in\ZZ$.

\subsection{Exponential sums and $\ell^2\big(\ZZ^d\big)$ approximations}

 Recall that for $q\in\NN$ 
\[
A_q=\big\{a\in\NN_q^d: \mathrm{gcd}\big(q, \mathrm(a_{\gamma}: \gamma\in\Gamma)\big)\big\}.
\]
Now for $q \in \NN$ and $a \in A_q$ we define the Gaussian sums 
$$
G(a/q) = q^{-k} \sum_{y \in \NN^k_q} e^{2\pi i \sprod{(a/q)}{\calQ(y)}}.
$$
Let us observe that there exists $\delta>0$ such that
\begin{equation}
	\label{eq:20}
	\lvert G(a/q) \rvert \lesssim q^{-\delta}.
\end{equation}
This follows from  the multi-dimensional variant of
Weyl's inequality (see \cite[Proposition 3]{SW0}).

 Let $P$ be a polynomial in $\RR^k$ of degree $d \in\NN$ such that
\[
	P(x) = \sum_{0 < \norm{\gamma} \leq d} \xi_\gamma x^\gamma.
\]
Given $N \geq 1$, let $\Omega_N$ be a convex set in $\RR^k$ such that
\[
	 \Omega_N \subseteq \big\{x \in \RR^k : \norm{x-x_0} \leq cN \big\}
\]
for some $x_0\in\RR^k$ and $c > 0$.
 We define the Weyl sums
\begin{align}
  \label{eq:215}
	S_N = \sum_{n \in \Omega_N \cap \ZZ^k} e^{2\pi i P(n)}\varphi(n)
\end{align}
where  $\varphi:\RR^k\mapsto \CC$ is  a continuously differentiable function which for some $C>0$ satisfies
\begin{align}
  \label{eq:217}
|\varphi(x)|\le C, \qquad \text{and} \qquad |\nabla \varphi(x)|\le C(1+|x|)^{-1}.  
\end{align}

In \cite{mst1} we have proven Theorem \ref{thm:3} which is a  refinement of the estimates for the
multi-dimensional Weyl sums $S_N$, where the limitations $N^\varepsilon \leq q \leq
  N^{k-\varepsilon}$ from
  \cite[Proposition 3]{SW0} are replaced by the weaker restrictions
  $(\log{N})^\beta \leq q \leq N^k(\log{N})^{-\beta}$ for 
  appropriate $\beta$. Namely.

\begin{theorem}
	\label{thm:3}
        Assume that there is a multi-index $\gamma_0$ such that $0 < \norm{\gamma_0} \leq d$ and
	\[
		\Big\lvert
		\xi_{\gamma_0} - \frac{a}{q}
		\Big\rvert
		\leq
		\frac{1}{q^2}
	\]
        for some integers $a, q$ such that $0\le a\le q$ and $(a, q) =
        1$. Then for any $\alpha>0$ there is  $\beta_{\alpha}>0$ so that, for any
        $\beta\ge \beta_{\alpha}$, if
	\begin{equation}
		\label{eq:57}
		(\log N)^\beta \leq q \leq N^{\norm{\gamma_0}} (\log N)^{-\beta}
	\end{equation}
        then there is a constant $C>0$ 
	\begin{equation}
		\label{eq:56}
		|S_N|
		\leq
		C
		N^k (\log N)^{-\alpha}.
	\end{equation}
	The implied  constant $C$ is independent of $N$. 
\end{theorem}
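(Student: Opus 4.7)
Let $L = \|\gamma_0\|$. The strategy is to refine the classical multi-dimensional Weyl argument of \cite[Proposition~3]{SW0} by tracking logarithmic rather than polynomial losses throughout. First, after choosing a coordinate $j$ with $(\gamma_0)_j \geq 1$, I would apply Weyl--van der Corput differencing $L-1$ times, introducing auxiliary shift variables $h=(h_1,\ldots,h_{L-1})$ of size $\lesssim N$. Each differencing lowers the total polynomial degree of the phase by one, so the resulting innermost sum is a linear exponential in one remaining variable whose leading frequency has the shape $L!\,\xi_{\gamma_0}\, h^{\beta}$ for some monomial $h^\beta$ of total degree $L-1$ in the shift variables. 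The weight $\varphi$ is treated in parallel by Abel summation; by \eqref{eq:217} this costs only $(\log N)^{O_L(1)}$.

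The standard one-variable bound for the innermost linear sum is
\[
\min\bigl(N, \|L!\,\xi_{\gamma_0}\, h^\beta\|^{-1}\bigr),
\]
where $\|\cdot\|$ denotes distance to the nearest integer, so collecting the differencing inequalities gives
\[
|S_N|^{2^{L-1}} \;\lesssim\; N^{(2^{L-1}-1)k}(\log N)^{O_L(1)}\sum_{|h|\lesssim N}\min\bigl(N,\|L!\,\xi_{\gamma_0}\, h^\beta\|^{-1}\bigr).
\]
Next, write $m = L!\, h^\beta$ and split the $h$-sum dyadically according to the size of $\|\xi_{\gamma_0} m\|$. The number of $h$ yielding a given $m$ is controlled on average by the generalised divisor function, whose averaged form $\sum_{m\le M}d_{L-1}(m)\lesssim M(\log M)^{L-2}$ loses only logarithmically in $M$. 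The Diophantine hypothesis $|\xi_{\gamma_0}-a/q|\le q^{-2}$ with $(a,q)=1$ then gives, via the classical lattice-point count in arithmetic progressions, an inequality of the form
\[
|S_N|^{2^{L-1}} \;\lesssim\; N^{2^{L-1}k}(\log N)^{A_L}\Bigl(q^{-1}+qN^{-L}+N^{-1}\Bigr)
\]
for some $A_L>0$ depending only on $L$ and the ambient dimensions.

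Under \eqref{eq:57} each of the three terms in parentheses is at most $(\log N)^{-\beta}$, so taking the $2^{L-1}$-th root yields $|S_N|\lesssim N^k(\log N)^{-\beta/2^{L-1}+A_L/2^{L-1}}$; choosing $\beta_\alpha=2^{L-1}(\alpha+A_L)$ closes the estimate. The main obstacle is precisely the bookkeeping of these logarithmic losses: in the classical range $N^\varepsilon\le q\le N^{L-\varepsilon}$ any power of $\log N$ is harmless beside $N^\varepsilon$, whereas here every such factor competes directly with the hypothesised gain. In particular the divisor function must be handled in averaged rather than pointwise form -- the naive bound $d_{L-1}(m)\lesssim_\varepsilon m^\varepsilon$ would produce a catastrophic loss of $N^{L\varepsilon}$ -- and the Abel summations absorbing the weight $\varphi$ must be carefully accounted for in $A_L$. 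This is precisely the refinement carried out in \cite{mst1}, from which the theorem is cited.
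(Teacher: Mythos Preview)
The paper does not prove this theorem; it merely quotes it from \cite{mst1} (see the sentence immediately preceding the statement: ``In \cite{mst1} we have proven Theorem \ref{thm:3}\ldots''). You correctly acknowledge this at the end of your sketch, so in the strict sense there is nothing to compare against.

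Your outline is the right strategy and identifies the genuine point of the refinement: in the classical Stein--Wainger argument the factors lost to divisor bounds and to summation by parts for the weight $\varphi$ are absorbed into $N^\varepsilon$, whereas under the hypothesis \eqref{eq:57} they must be kept logarithmic, which forces the averaged divisor estimate $\sum_{m\le M} d_{L-1}(m)\lesssim M(\log M)^{L-2}$ in place of the pointwise $d_{L-1}(m)\lesssim_\varepsilon m^\varepsilon$. That is exactly the content of the result in \cite{mst1}.

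One point where your sketch is thinner than the actual argument: in the multi-variable setting the polynomial $P$ has many monomials of the same total degree, and differencing $L-1$ times in a single coordinate $j$ does not isolate the coefficient $\xi_{\gamma_0}$ alone---the resulting linear phase carries contributions from every $\xi_\gamma$ with $\gamma_j$ maximal. The passage from the differenced sum to an expression controlled solely by the rational approximation to $\xi_{\gamma_0}$ therefore requires an additional reduction (fixing the remaining variables, or a further layer of summation over the other coefficients) that your write-up elides. This is handled in \cite{mst1} but is not as immediate as the one-variable analogy suggests.
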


Let $\big(\seq{m_n}{n \geq 0}\big)$ be a sequence of
multipliers on $\TT^d$, corresponding to the operators
\eqref{eq:85}.  
Then for any finitely supported function  $f:\ZZ^d\mapsto\CC$ we see that
$$
T_nf(x)=\calF^{-1} \big(m_n \hat{f}\big)(x)
$$
where 
$$
m_n(\xi) = \sum_{y \in \ZZ^k} e^{2\pi i \sprod{\xi}{\calQ(y)}} K_n(y).
$$
For $n \geq 0$ we set
$$
\Phi_n(\xi) = \int_{\RR^k} e^{2\pi i \sprod{\xi}{\calQ(y)}} K_n(y) {\rm d}y.
$$
Using multi-dimensional version of van der Corput's lemma (see \cite[Propositon 2.1]{sw})
we obtain
\begin{equation}
	\label{eq:18}
	\lvert \Phi_n(\xi) \rvert
	\lesssim
	\min\big\{1, \abs{2^{n A} \xi}_{\infty}^{-1/d}\big\}.
\end{equation}
Moreover, if $n \geq 1$ we have
\begin{equation}
	\label{eq:19}
	\lvert \Phi_n(\xi) \rvert
	=
	\Big\lvert
	\Phi_n(\xi) - \int_{\RR^k} K_n(y) {\rm d}y
	\Big\rvert
	\lesssim
	\min\big\{1, \abs{2^{nA} \xi}_{\infty} \big\}.
\end{equation}

The next proposition shows relations between $m_n$ and $\Phi_n$. 

\begin{proposition}
  \label{prop:0}
 There is a constant $C>0$ such
  that for every $n\in\NN$ and for every $\xi\in [1/2, 1/2)^d$ satisfying 
        $$
	\Big\lvert \xi_\gamma - \frac{a_\gamma}{q} \Big\rvert \leq
        L_1^{-|\gamma|}L_2
	$$
	for all $\gamma \in \Gamma$, where  $1\le q\le L_3\le 2^{n/2}$, $a\in
        A_q$, $L_1\ge 2^n$  and $L_2\ge1$ we have 
        \begin{align}
          \label{eq:41}
          \big|m_n(\xi)-G(a/q)\Phi_{n}(\xi-a/q)\big|\le 
          C\Big(L_32^{-n}
          +L_2L_32^{-n}\sum_{\gamma \in
            \Gamma}\big(2^n/L_1\big)^{|\gamma|}\Big)\le CL_2L_32^{-n}.
        \end{align}

\end{proposition}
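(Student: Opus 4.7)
\medskip

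\noindent\textbf{Proof sketch.}
The plan is the standard Weyl-type comparison between an exponential sum and its continuous counterpart, where the Gaussian sum $G(a/q)$ arises from summing against residues modulo $q$ and the rest is a Riemann-sum error controlled by $|\nabla F_r|$ for a suitably defined $F_r$.

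\medskip

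First, write $\xi = a/q + \theta$ with $|\theta_\gamma| \le L_1^{-|\gamma|}L_2$, and decompose every $y \in \ZZ^k$ uniquely as $y = r + qz$ with $r \in \NN_q^k$ and $z \in \ZZ^k$. Because $\calQ$ has integer coefficients, $\calQ_\gamma(r+qz) \equiv \calQ_\gamma(r) \pmod{q}$ for every $\gamma \in \Gamma$, hence
\[
e^{2\pi i \sprod{\xi}{\calQ(r+qz)}} = e^{2\pi i \sprod{(a/q)}{\calQ(r)}}\, e^{2\pi i \sprod{\theta}{\calQ(r+qz)}}.
\]
Setting $F_r(y) = e^{2\pi i \sprod{\theta}{\calQ(y)}} K_n(y)$, one gets
\[
m_n(\xi) = \sum_{r \in \NN_q^k} e^{2\pi i \sprod{(a/q)}{\calQ(r)}} \sum_{z \in \ZZ^k} F_r(r+qz),
\qquad
G(a/q)\Phi_n(\theta) = \sum_{r \in \NN_q^k} e^{2\pi i \sprod{(a/q)}{\calQ(r)}} \cdot q^{-k}\!\int_{\RR^k}\! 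F_r(y)\,{\rm d}y,
\]
so everything reduces to estimating, for each $r$, the Riemann-sum error $E_r := \sum_z F_r(r+qz) - q^{-k}\int F_r$.

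\medskip

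Next, apply the mean-value theorem on each cube $C_z = r + qz + [0,q)^k$: $|q^k F_r(r+qz) - \int_{C_z} F_r| \lesssim q^{k+1} \sup_{C_z}|\nabla F_r|$. Since $K_n$ is supported in $\{2^{n-2} \le |y| \le 2^n\}$, only $O((2^n/q)^k)$ cubes contribute. The gradient splits as
\[
\nabla F_r(y) = e^{2\pi i \sprod{\theta}{\calQ(y)}}\bigl(\nabla K_n(y) + 2\pi i K_n(y) \sum_{\gamma \in \Gamma} \theta_\gamma \nabla(y^\gamma)\bigr),
\]
and on the support of $K_n$ the bounds \eqref{eq:88} give $|K_n(y)| \lesssim 2^{-nk}$, $|\nabla K_n(y)| \lesssim 2^{-n(k+1)}$, while $|\theta_\gamma \nabla(y^\gamma)| \lesssim L_2 (2^n/L_1)^{|\gamma|} 2^{-n}$. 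Summing the two contributions yields
\[
|E_r| \lesssim q^{1-k} 2^{-n}\Bigl(1 + L_2 \sum_{\gamma\in\Gamma}(2^n/L_1)^{|\gamma|}\Bigr),
\]
and summing over the $q^k$ residues $r$ and using $q \le L_3$ gives the first bound in \eqref{eq:41}. The final inequality is obtained by invoking $L_1 \ge 2^n$ (so each $(2^n/L_1)^{|\gamma|} \le 1$, and the $\gamma$-sum is bounded by the constant $|\Gamma| = d$) together with $L_2 \ge 1$ to absorb the first term.

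\medskip

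The only delicate point is the Riemann-sum step: one must track how the gradient of $F_r$ interacts with both the decay of $K_n$ and the size of $\theta$, and verify that the extra factor $q$ coming from the cube diameter is compensated by $q^k$ from the number of residues and $q^{-k}$ from the Riemann-sum normalization, so that the final bound scales only like $q \cdot 2^{-n}$, which is small thanks to the constraint $q \le L_3 \le 2^{n/2}$.
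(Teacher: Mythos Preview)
Your proof is correct and follows essentially the same approach as the paper: residue decomposition modulo $q$ followed by a mean-value/Riemann-sum comparison, with the same gradient bounds on $K_n$ and the phase. The only cosmetic difference is that the paper carries out the comparison in two steps (first shifting $qy+r \to qy$ to factor out $G(a/q)$, then replacing the sum by the integral), whereas you do both at once via a single Riemann-sum error $E_r$; the resulting bounds are identical.
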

\begin{proof}
	Let $\theta = \xi - a/q$. For any $r \in \NN_q^k$, if $y \equiv r \pmod q$ then
	for each $\gamma \in \Gamma$
	$$
	\xi_\gamma y^\gamma \equiv \theta_\gamma y^\gamma
	+ (a_\gamma/q) r^\gamma \pmod 1,
	$$
	thus
	$$
	\sprod{\xi}{\calQ(y)} \equiv \sprod{\theta}{\calQ(y)} + \sprod{(a/q)}{\calQ(r)} \pmod 1.
	$$
	Therefore,
	\[
	\sum_{y \in \ZZ^k} e^{2\pi i \sprod{\xi}{\calQ(y)}} K_n(y)
	=
	\sum_{r \in \NN_q^k}
	e^{2\pi i \sprod{(a/q)}{\calQ(r)}}
	\sum_{y \in \ZZ^k}
	e^{2\pi i \sprod{\theta}{\calQ(qy+r)}}
	K_n(qy+r).
	\]
	If $2^{n-2} \leq \norm{q y + r}, \norm{qy} \leq 2^n$ then by the
        mean value theorem we obtain
	\[
	\big\lvert
	\sprod{\theta}{\calQ(q y + r)} - \sprod{\theta}{\calQ(q y)}
	\big\rvert
	\lesssim
	\norm{r}
	\sum_{\gamma \in \Gamma}
	\abs{\theta_\gamma}
	\cdot
	2^{n(\abs{\gamma} - 1)}
	\lesssim
	q \sum_{\gamma \in \Gamma}
	L_1^{-\abs{\gamma}} L_2 2^{n(\abs{\gamma}-1)}
	\lesssim L_2L_32^{-n}\sum_{\gamma \in \Gamma}\big(2^n/L_1\big)^{\abs{\gamma}}
	\]
	and
	$$
	\big\lvert
	K_n(q y + r) - K_n(q y)
	\big\rvert
	\lesssim
	2^{-n(k+1)}L_3.
        $$
	Thus
        \[
        \sum_{y \in \ZZ^k} e^{2\pi i \sprod{\xi}{\calQ(y)}} K_n(y)
	=G(a/q)\cdot q^k\sum_{y \in \ZZ^k} e^{2\pi i
          \sprod{\theta}{\calQ(qy)}} K_n(qy)
+\mathcal O\Big(L_32^{-n}+L_2L_32^{-n}\sum_{\gamma \in \Gamma}\big(2^n/L_1\big)^{|\gamma|}\Big).
        \]
Now one can replace the sum on the right-hand side by the
integral. Indeed,  again by the mean value theorem  we obtain
  \begin{multline*}
    \Big|\sum_{y \in \ZZ^k} e^{2\pi i
          \sprod{\theta}{\calQ(qy)}} K_n(qy)-\int_{\RR^k}e^{2\pi i
          \theta\cdot\calQ(qt)}K_n(qt){\rm d}t\Big|\\
    = \Big|\sum_{y \in \ZZ^k} \int_{[0, 1)^k}\big(e^{2\pi i
          \sprod{\theta}{\calQ(qy)}} K_n(qy)-e^{2\pi i
          \theta\cdot\calQ(q(y+t))}K_n(q(y+t)){\rm d}t\big)\Big|\\
=\mathcal O\Big(q^{-k}L_32^{-n}+q^{-k}L_2L_32^{-n}\sum_{\gamma \in \Gamma}\big(2^n/L_1\big)^{|\gamma|}\Big).
  \end{multline*}
This completes the proof of Proposition \ref{prop:0}.
\end{proof}

\subsection{Discrete Littlewood--Paley theory}
Fix $j, n\in\ZZ$ and $N\in\NN$ and let $\mathcal
E_N$ be a diagonal $d\times d$ matrix with positive entries
$(\varepsilon_{\gamma}: \gamma\in\Gamma)$ such that
$\varepsilon_{\gamma} \le e^{-N^{2\rho}}$ with $\rho>0$ as in Section \ref{sec:6}. Let us consider the multipliers 
\begin{align}
  \label{eq:9}
  \Omega_{N}^{j, n}(\xi)=\sum_{a/q\in\mathscr U_{N}}\Phi_{j,
    n}(\xi-a/q)\eta_N(\xi-a/q)
\end{align}
with $\eta_N(\xi)=\eta\big(\mathcal E_N^{-1}\xi\big)$ and  $\Phi_{j,
  n}(\xi)=\Phi\big(2^{nA+jI}\xi\big)$, where $\Phi$
is  a Schwartz function such that $\Phi(0)=0$.
  
If we had $\mathscr U_{N}=\{0\}$ then $\Omega_{N}^{j, n}(\xi)$ could
be treated as a standard Littlewood--Paley projector.  Now we
formulate an abstract  theorem which can be thought as a discrete
variant of Littlewood--Paley theory. Its proof will be based on
Theorem \ref{th:3}. Here we only obtain some square function estimate
which is interesting in its own right. However, we will be able appreciate its
usefulness later, in the proof of inequality \eqref{eq:84}.
\begin{theorem}
  \label{thm:30}
For every
$p\in(1, \infty)$ there is a
constant $C_p>0$ such that for all $-\infty\le M_1\le M_2\le \infty$,
$j\in\ZZ$ and $N\in\NN$ and every  $f\in\ell^p\big(\ZZ^d\big)$ we have
 \begin{align}
   \label{eq:23}
   \bigg\|\Big(\sum_{M_1\le n\le M_2}\big|\mathcal
  F^{-1}\big(\Omega_{N}^{j, n}\hat{f}\big)\big|^2\Big)^{1/2}\bigg\|_{\ell^p}\le C_p \log N\|f\|_{\ell^p}.
 \end{align}
\end{theorem}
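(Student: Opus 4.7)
The plan is to reduce \eqref{eq:23} to a scalar $\ell^p$-multiplier bound for which Theorem \ref{th:3} applies directly, using Khintchine's inequality to handle the square function. Fix $j$, $N$, and $M_1 \le M_2$; for a sequence of signs $\epsilon = (\epsilon_n)_{M_1\le n\le M_2}\in\{-1,+1\}^{M_2-M_1+1}$ put
\[
m_\epsilon(\xi) = \sum_{M_1\le n\le M_2}\epsilon_n\,\Omega_N^{j,n}(\xi) = \sum_{a/q\in\mathscr U_N}\Psi_j^\epsilon(\xi-a/q)\,\eta_N(\xi-a/q),
\]
where $\Psi_j^\epsilon(\xi)=\sum_{M_1\le n\le M_2}\epsilon_n\,\Phi(2^{nA+jI}\xi)$. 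By Khintchine's inequality applied pointwise in $x\in\ZZ^d$ and then summed in $\ell^p(\ZZ^d)$, the left-hand side of \eqref{eq:23} is comparable, up to a constant depending only on $p$, to $\big(\mathbb{E}_\epsilon\|\mathcal F^{-1}(m_\epsilon\hat f)\|_{\ell^p}^p\big)^{1/p}$. It therefore suffices to prove, uniformly in the signs $\epsilon$,
\[
\big\|\mathcal F^{-1}(m_\epsilon\hat f)\big\|_{\ell^p(\ZZ^d)}\le C_p(\log N)\,\|f\|_{\ell^p(\ZZ^d)}.
\]

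The symbol $m_\epsilon$ has exactly the Ionescu--Wainger form \eqref{eq:74} with underlying continuous multiplier $\Theta = \Psi_j^\epsilon$ and the same $\eta_N$, $\mathscr U_N$. By Theorem \ref{th:3} the task reduces to showing that $\Psi_j^\epsilon$ is an $L^p(\RR^d)$ Fourier multiplier with operator norm $\boldA_p$ bounded uniformly in $j\in\ZZ$ and in the signs $\epsilon$. Since $\Phi$ is Schwartz with $\Phi(0) = 0$, the family $\{\Phi(2^{nA+jI}\cdot)\}_n$ forms an anisotropic Littlewood--Paley partition modulo Schwartz tails: the cancellation $\Phi(0)=0$ gives $|\Phi(\xi)|\lesssim|\xi|$ near the origin and handles the low-frequency overlap, while the Schwartz decay controls the high-frequency tail. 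This yields $|\Psi_j^\epsilon(\xi)|\le C$ uniformly in $\xi,j,\epsilon$, and the same bookkeeping delivers the anisotropic Mikhlin--H\"ormander type derivative estimates for $\Psi_j^\epsilon$ uniformly in $\epsilon$ and $j$ (the isotropic dilation $2^{jI}$ is absorbed by the change of variables $\xi\mapsto 2^{-jI}\xi$, which is an $L^p$-isometry up to normalization). The anisotropic Mikhlin--H\"ormander theorem then produces the required uniform $L^p(\RR^d)$ bound; Theorem \ref{th:3} supplies the discrete estimate with the expected $\log N$ loss, and a final application of Khintchine recovers the square function on the discrete side.

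The one non-routine ingredient is the uniform-in-$\epsilon$ continuous multiplier bound on $\Psi_j^\epsilon$; this is standard harmonic analysis, entirely independent of the arithmetic structure, but it does require a careful almost-orthogonality argument tailored to the one-parameter semigroup $\{2^{nA}\}$ of anisotropic dilations. Once it is in place the remainder of the proof is purely organizational: Theorem \ref{th:3} packages the Ionescu--Wainger arithmetic projection with the sharp $\log N$ loss, and Khintchine's inequality handles the passage between the scalar multiplier and the square function in both the discrete reduction and the continuous bound. In particular, the $\log N$ loss in \eqref{eq:23} is inherited directly from \eqref{eq:12}.
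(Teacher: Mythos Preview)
Your proposal is correct and follows essentially the same strategy as the paper: reduce the square function to a scalar multiplier via Khintchine's inequality, recognize the resulting symbol as an Ionescu--Wainger multiplier with continuous part $\Psi_j^\epsilon=\sum_n\epsilon_n\Phi(2^{nA+jI}\cdot)$, verify a uniform $L^p(\RR^d)$ bound for $\Psi_j^\epsilon$, and then apply Theorem~\ref{th:3} to pick up the $\log N$ loss. The only cosmetic difference is in how the continuous $L^p$ bound is justified: you invoke an anisotropic Mikhlin--H\"ormander criterion, while the paper records the pointwise bound $|\epsilon_n\Phi(2^{nA+jI}\xi)|\lesssim\min\{|2^{nA+jI}\xi|_\infty,|2^{nA+jI}\xi|_\infty^{-1}\}$ together with the trivial maximal bound and cites the singular-integral machinery of \cite{bigs} directly---functionally the same input.
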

\begin{proof}
  
By Khinchine's inequality \eqref{eq:23} is equivalent to the following
\begin{align}
  \label{eq:24}
  \bigg(\int_0^1\bigg\|\sum_{M_1\le n\le M_2}\varepsilon_n(t)\mathcal
  F^{-1}\big(\Omega_{N}^{j, n}\hat{f}\big)\bigg\|_{\ell^p}^p{\rm d}t\bigg)^{1/p}\lesssim \log N\|f\|_{\ell^p}.
\end{align}
Observe  that the multiplier from \eqref{eq:24} can be
rewritten as follows
\begin{align*}
 \sum_{M_1\le n\le M_2}\varepsilon_n(t) \Omega_N^{j, n}(\xi)=\sum_{a/q\in\mathscr
    U_{N}}\sum_{M_1\le n\le M_2}\mathfrak
  m_n(\xi-a/q)\eta_N(\xi-a/q)
\end{align*}
with the functions 
\[
\mathfrak m_n(\xi)=\varepsilon_n(t)\Phi\big(2^{nA+jI}\xi\big).
\]
We observe that
\[
|\mathfrak m_n(\xi)|\lesssim\min\big\{|2^{nA+jI}\xi|_{\infty}, |2^{nA+jI}\xi|_{\infty}^{-1}\big\}. 
\] 
The first bound follows from the mean-value theorem, since
\[
\big|\Phi\big(2^{nA+jI}\xi\big)\big|=
\big|\Phi\big(2^{nA+jI}\xi\big)-
\Phi(0)\big|\lesssim\big|2^{nA+jI}\xi\big|\sup_{\xi\in\RR^d}\big|\nabla
\Phi(\xi)\big| \lesssim |2^{nA+jI}\xi|_{\infty}.
\]
The second bound follows since $\Phi$ is a Schwartz
function. Moreover, for every $p\in(1, \infty)$ there is $C_p>0$ such
that 
\[
\big\|\sup_{n\in\ZZ}\big|\calF^{-1}\big(\mathfrak m_n\calF
f\big)\big|\big\|_{L^p}\le C_p \|f\|_{L^p}
\]
for every $f\in L^p\big(\RR^d\big)$. Therefore, by \cite{bigs} the
 multiplier  
\[
\sum_{M_1\le n\le M_2}\mathfrak m_n(\xi)
\]
corresponds to a continuous singular integral, thus
defines a bounded operator 
on $L^p\big(\RR^d\big)$ for all
$p\in(1, \infty)$ with the bound independent of $j\in\ZZ$ and $-\infty\le M_1\le
M_2\le \infty$. Hence, Theorem \ref{th:3} applies and  
the multiplier 
\[
\sum_{M_1\le n\le M_2}\varepsilon_n(t) \Omega_N^{j, n}(\xi)
\]
 defines a
bounded  operator on $\ell^p\big(\ZZ^d\big)$ with the $\log N$ loss, and \eqref{eq:24} is
established.
\end{proof}

\begin{remark}
  If the function $\Phi$ is a real-valued function then we have
  \begin{align}
    \label{eq:52}
   \Big\|\sum_{M_1\le n\le M_2}\mathcal
  F^{-1}\big(\Omega_{N}^{j, n}\hat{f_n}\big)\Big\|_{\ell^p}\le C_p \log N\bigg\|\Big(\sum_{M_1\le n\le M_2}|f_n|^2\Big)^{1/2}\bigg\|_{\ell^p}.    
  \end{align}
This is the dual version  
of inequality \eqref{eq:23} for any sequence of functions $\big(f_n:
M_1\le n\le M_2\big)$ such that 
\[
\bigg\|\Big(\sum_{M_1\le n\le M_2}|f_n|^2\Big)^{1/2}\bigg\|_{\ell^p}<\infty.
\]
\end{remark}

We have  gathered all necessary ingredients to prove inequality
\eqref{eq:84}.

\begin{proof}[Proof of inequality \eqref{eq:84}]

  Let $\chi>0$ and $l\in\NN$ be the numbers whose precise valued will
  be adjusted later. As in \cite{mst1} we will consider for every
  $n\in\NN_0$ the multipliers
\begin{align}
  \label{eq:53}
  \Xi_{n}(\xi)=\sum_{a/q\in\mathscr U_{n^l}}\eta\big(2^{n(A-\chi I)}(\xi-a/q)\big)
\end{align}
with $\mathscr U_{N}$ which has been defined in Section \ref{sec:6}. 
Theorem \ref{th:3} yields, for every $p\in(1, \infty)$, that 
\begin{align}
  \label{eq:54}
  \big\|\mathcal
  F^{-1}\big(\Xi_{n}\hat{f}\big)\big\|_{\ell^p}\lesssim \log(n+2)\|f\|_{\ell^p}.
\end{align}
The implicit constant in \eqref{eq:54} depends on $\rho>0$ from
Theorem \ref{th:3}. From now on we will assume that $l\in\NN$ and
$\rho>0$ are related by the equation
\begin{align}
  \label{eq:62}
  10\rho l=1.
\end{align}
Assume  that $f:\ZZ^d\mapsto\CC$ has finite support and $f\ge0$. Observe that
\begin{align}
  \label{eq:30}
   \Big\|\sum_{n\ge0}T_{n}f\Big\|_{\ell^p}
\le 
  \Big\|\sum_{n\ge0}\mathcal
  F^{-1}\big(m_n\Xi_{n}\hat{f}\big)\Big\|_{\ell^p}
+ 
  \Big\|\sum_{n\ge0}\mathcal
  F^{-1}\big(m_n(1-\Xi_{n})\hat{f}\big)\Big\|_{\ell^p}.
\end{align}
Without of loss of generality we may assume that $p\ge 2$, the case
$1<p\le 2$ follows by the duality then. 
\subsection{The estimate of the second norm in \eqref{eq:30}}

It suffices to show that
\begin{align}
  \label{eq:38}
  \big\|\mathcal
  F^{-1}\big(m_n(1-\Xi_{n})\hat{f}\big)\big\|_{\ell^p}\lesssim (n+1)^{-2}\|f\|_{\ell^p}.
\end{align}
For this purpose we   define for every $x\in\ZZ^d$ the Radon averages  
\[
M_Nf(x)=N^{-k}\sum_{y\in\NN_N^k}f\big(x-\calQ(y)\big).
\]
From \cite{mst1} follows that for every $p\in(1, \infty)$ there is a constant
$C_p>0$ such that for every $f\in\ell^p\big(\ZZ^d\big)$ we have
\begin{align}
  \label{eq:55}
  \big\|\sup_{N\in\NN}|M_Nf|\big\|_{\ell^p}\le C_p\|f\|_{\ell^p}.
\end{align}
Then for every $1<p<\infty$ by \eqref{eq:54} and \eqref{eq:55}  we obtain 
\begin{align}
  \label{eq:21}
  \big\|\mathcal
  F^{-1}\big(m_n(1-\Xi_{n})\hat{f}\big)\big\|_{\ell^p}\le
\big\|\sup_{N\in\NN}M_Nf\big\|_{\ell^p}+\big\|\sup_{N\in\NN}M_N\big(\big|
  \mathcal
  F^{-1}\big(\Xi_{n}\hat{f}\big)\big|\big)\big\|_{\ell^p}\lesssim \log (n+2)\|f\|_{\ell^p}
\end{align}
since we have a pointwise bound
\begin{align}
  \label{eq:63}
\big|\mathcal F^{-1}\big(m_n\hat{f}\big)(x)\big|=|T_nf(x)|\lesssim M_{2^n}f(x).  
\end{align}
We show that it is possible to improve  estimate
\eqref{eq:21} 
for $p=2$. 
Indeed, by Theorem \ref{thm:3} we will show that for big enough $\alpha>0$, which will be specified
later, and for all $n\in\NN_0$  we have
\begin{align}
  \label{eq:60}
 \big|
 m_{n}(\xi)(1-\Xi_{n}(\xi))\big|\lesssim (n+1)^{-\alpha}.
\end{align} 
By Dirichlet's principle we have for every $\gamma\in\Gamma$ 
\[
|\xi_{\gamma}-a_{\gamma}/q_{\gamma}|\le q_{\gamma}^{-1}n^{\beta}2^{-n|\gamma|}
\] 
where $1\le q_{\gamma}\le n^{-\beta}2^{n|\gamma|}$. In order to apply
Theorem \ref{thm:3} we must show that there exists some
$\gamma\in\Gamma$ such that $n^{\beta}\le q_{\gamma }\le
n^{-\beta}2^{n|\gamma|}$. Suppose for a contradiction that for every
$\gamma \in \Gamma$  we have $1\le q_{\gamma }<n^{\beta} $ then for
some 
$q\le \mathrm{lcm}(q_{\gamma}: \gamma\in\Gamma)\le n^{\beta d}$ we have 
\[
|\xi_{\gamma}-a_{\gamma}'/q|\le n^{\beta}2^{-n|\gamma|}
\] 
where $\mathrm{gcd}\big(q, \mathrm{gcd}({a_{\gamma}'}:
\gamma\in\Gamma)\big)=1$.
Hence, taking $a'=(a_{\gamma}': \gamma\in\Gamma)$ we have
$a'/q\in\mathscr U_{n^l}$ provided that $\beta d<l$. On the
other hand, if $1-\Xi_{n}(\xi)\not=0$ then for every
$a'/q\in\mathscr U_{n^l}$ there exists $\gamma\in\Gamma$ such that
\[
|\xi_{\gamma}-a_{\gamma}'/q|>(16d)^{-1} 2^{-n(|\gamma|-\chi)}.
\] 
Therefore
\[
2^{\chi n}<16dn^{\beta}
\] 
 but this is impossible when $n\in\NN$ is large.
Hence, there is
$\gamma\in\Gamma$ such that $n^{\beta}\le q_{\gamma }\le
n^{-\beta}2^{n|\gamma|}$. Thus by Theorem \ref{thm:3}  
\[
|m_n(\xi)|\lesssim (n+1)^{-\alpha}
\]
provided that $1-\Xi_{n}(\xi)\not=0$. This yields \eqref{eq:60} and we
obtain
\begin{align}
\label{eq:56}  
\big\|\mathcal
  F^{-1}\big(m_n(1-\Xi_{n})\hat{f}\big)\big\|_{\ell^2}\lesssim(1+n)^{-\alpha} \log (n+2)\|f\|_{\ell^2}.
\end{align}
Interpolating \eqref{eq:56} with \eqref{eq:21} we obtain 
\begin{align}
\label{eq:57}  
 \big\|\mathcal F^{-1}\big(m_n(1-\Xi_{n})\hat{f}\big)\big\|_{\ell^p}\lesssim(1+n)^{-c_p\alpha} \log (n+2)\|f\|_{\ell^p}.
\end{align}
for some $c_p>0$. Choosing $\alpha>0$ and $l\in\NN$ appropriately
large one obtains \eqref{eq:38}. 

\subsection{The estimate of the first  norm in \eqref{eq:30}}
 Note that for
any $\xi\in\TT^d$ so that
\[
|\xi_{\gamma}-a_{\gamma}/q|\le 2^{-n(|\gamma|-\chi)}
\] 
for every $\gamma\in \Gamma$
with $1\le q\le e^{n^{1/10}}$ we have 
\begin{align}
\label{eq:61}
  m_n(\xi)=G(a/q)\Phi_n(\xi-a/q)+q^{-\delta}E_{2^n}(\xi)
\end{align}
where 
\begin{align}
\label{eq:64}
  |E_{2^n}(\xi)|\lesssim 2^{-n/2}.
\end{align}
 Proposition \ref{prop:0}, with
$L_1=2^n$, $L_2=2^{\chi n}$ and $L_3=e^{n^{1/10}}$, establishes
\eqref{eq:61} and \eqref{eq:64}, since for sufficiently large
$n\in\NN$ we have
\[
q^{\delta}|E_{2^n}(\xi)|\lesssim q^{\delta}L_2L_32^{-n}\lesssim
\big(e^{-n((1-\chi)\log 2-2n^{-9/10})}\big)\lesssim 2^{-n/2}
\]
provided $\chi>0$ is
sufficiently small. Now for
every $j, n\in\NN_0$ we introduce the
 multipliers
\begin{align}
  \label{eq:69}
  \Xi_{n}^j(\xi)=\sum_{a/q\in\mathscr U_{n^l}}\eta\big(2^{nA+jI}(\xi-a/q)\big)^2
\end{align}
and we note that
\begin{align}
  \label{eq:65}
    \Big\|\sum_{n\ge0}\mathcal
  F^{-1}\big(m_n\Xi_{n}\hat{f}\big)\Big\|_{\ell^p}
&\le \Big\|\sum_{n\ge0}\mathcal
  F^{-1}\Big(\sum_{-\chi n\le j<n}m_n\big(\Xi_{n}^j-\Xi_{n}^{j+1}\big)\hat{f}\Big)\Big\|_{\ell^p}\\
\nonumber&+\Big\|\sum_{n\ge0}\mathcal 
  F^{-1}\big(m_n\Xi_{n}^{n}\hat{f}\big)\Big\|_{\ell^p}=I_p^1+I_p^2.
\end{align}
We will estimate $I_p^1$ and $I_p^2$ separately. For this purpose
observe that by \eqref{eq:61} and \eqref{eq:64} for every
$a/q\in\mathscr U_{n^l}$ we have
\begin{align}
\label{eq:66}  
|m_n(\xi)|&\lesssim 
q^{-\delta}|\Phi_{n}(\xi-a/q)|+q^{-\delta}|E_{2^n}(\xi)|\\
  &\lesssim 
\nonumber q^{-\delta}\big(\min\big\{1, |2^{nA}(\xi-a/q)|_{\infty}, |2^{nA}(\xi-a/q)|_{\infty}^{-1/d}\big\}+2^{-n/2}\big)
\end{align}
where the last inequality follows from \eqref{eq:18} and
\eqref{eq:19}. Therefore by \eqref{eq:66} we get
\begin{align}
\label{eq:71}  
 \big|m_{n}(\xi)
\big(\eta\big(2^{nA+jI}(\xi-a/q)\big)^2-\eta\big(2^{nA+(j+1)I}(\xi-a/q)\big)^2\big)\big|
\lesssim q^{-\delta}\big(2^{-|j|/d}+2^{-n/2}\big).
\end{align}

\subsubsection{Bounding $I_p^2$} It will suffice to show, for some
$\varepsilon=\varepsilon_{p}>0$, that
\begin{align}
  \label{eq:102}
  \big\|\mathcal
  F^{-1}\big(m_n\Xi_{n}^{n}\hat{f}\big)\big\|_{\ell^p}\lesssim 2^{-\varepsilon n}\|f\|_{\ell^p}.
\end{align}
Observe that for any $1<p<\infty$ by \eqref{eq:63}, \eqref{eq:55} and \eqref{eq:54} we have
\begin{align}
 \label{eq:103}\big\|\mathcal
  F^{-1}\big(m_n\Xi_{n}^{n}\hat{f}\big)\big\|_{\ell^p}\le\big\|\sup_{N\in\NN}
  M_N\big(\big|\mathcal
  F^{-1}\big(\Xi_{n}^{n}\hat{f}\big)\big|\big)\big\|_{\ell^p}\lesssim
\big\|\mathcal
  F^{-1}\big(\Xi_{n}^{n}\hat{f}\big)\big\|_{\ell^p}
  \lesssim \log (n+2)\|f\|_{\ell^p}.
\end{align}
 For $p=2$ by Plancherel's theorem and by \eqref{eq:66} we obtain
 \begin{multline}
   \label{eq:105}
   \big\|\mathcal
  F^{-1}\big(m_n\Xi_{n}^{n}\hat{f}\big)\big\|_{\ell^2}
=
\bigg(\int_{\TT^d}\sum_{a/q\in\mathscr U_{n^l}}|m_{n}(\xi)|^2\eta\big(2^{nA+nI}(\xi-a/q)\big)^4
  |\hat{f}(\xi)|^2d\xi\bigg)^{1/2}\lesssim 
2^{-n/(2d)}\|f\|_{\ell^2}.
 \end{multline}
Therefore, interpolating \eqref{eq:103} with \eqref{eq:105} we obtain
for every $p\in(1, \infty)$ that 
\begin{align*}
 \big\|\mathcal
  F^{-1}\big(m_n\Xi_{n}^{n}\hat{f}\big)\big\|_{\ell^p}\lesssim 2^{-\varepsilon n}\|f\|_{\ell^p}
\end{align*}
which in turn implies \eqref{eq:102} and $I_p^2\lesssim \|f\|_{\ell^p}$.
\subsubsection{Bounding $I_p^1$}
Define for any $0\le s< n$ new multipliers 
\begin{align*}
  \label{eq:68}
  \Delta_{n, s}^j(\xi)=\sum_{a/q\in\mathscr
    U_{(s+1)^l}\setminus\mathscr
    U_{s^l}}\big(\eta\big(2^{nA+jI}(\xi-a/q)\big)^2-
\eta\big(2^{nA+(j+1)I}(\xi-a/q)\big)^2\big)\eta\big(2^{s(A-\chi I)}(\xi-a/q)\big)^2
\end{align*}
and we observe that by the definition \eqref{eq:69} we have
\begin{align*}
  \Xi_{n}^j(\xi)-\Xi_{n}^{j+1}(\xi)=\sum_{0\le s<n}\Delta_{n, s}^j(\xi).
\end{align*}
Moreover,  
\begin{multline*}
\eta\big(2^{nA+jI}(\xi)\big)^2-
\eta\big(2^{nA+(j+1)I}(\xi)\big)^2\\
=
\big(\eta\big(2^{nA+jI}(\xi)\big)^2-
\eta\big(2^{nA+(j+1)I}(\xi)\big)^2\big)
\cdot\big(\eta\big(2^{nA+(j-1)I}(\xi)\big)-
\eta\big(2^{nA+(j+2)I}(\xi)\big)\big).  
\end{multline*}
Thus we see 
\[
\Delta_{n, s}^j(\xi)=\Delta_{n, s}^{j, 1}(\xi)\cdot\Delta_{n, s}^{j, 2}(\xi),
\]
where
\begin{align*}
  \Delta_{n,s}^{j, 1}(\xi)=\sum_{a/q\in\mathscr
    U_{(s+1)^l}\setminus\mathscr
    U_{s^l}}\big(\eta\big(2^{nA+(j-1)I}(\xi-a/q)\big)-
\eta\big(2^{nA+(j+2)I}(\xi-a/q)\big)\big)\eta\big(2^{s(A-\chi I)}(\xi-a/q)\big)
\end{align*}
and
\begin{align*}
\Delta_{n, s}^{j, 2}(\xi)=\sum_{a/q\in\mathscr
    U_{(s+1)^l}\setminus\mathscr
    U_{s^l}}\big(\eta\big(2^{nA+jI}(\xi-a/q)\big)^2-
\eta\big(2^{nA+(j+1)I}(\xi-a/q)\big)^2\big)\eta\big(2^{s(A-\chi I)}(\xi-a/q)\big).  
\end{align*}
Moreover, $\Delta_{n,s}^{j, 1}$ and $\Delta_{n,s}^{j, 2}$ are the
multipliers which satisfy the assumptions of Theorem
\ref{thm:30}. Therefore, 
\begin{multline}
  \label{eq:70}
  I_p^1=\Big\|\sum_{n\ge0}\mathcal
  F^{-1}\Big(\sum_{-\chi n\le j< n}\sum_{0\le s<n}\Delta_{n, s}^{j,
    1}m_n\Delta_{n, s}^{j, 2}\hat{f}\Big)\Big\|_{\ell^p}\\
\le
\sum_{s\ge0}\sum_{j\in \ZZ}\Big\|\sum_{n\ge\max\{j,-j/\chi, s\}}\mathcal
  F^{-1}\Big(\Delta_{n, s}^{j, 1}m_n\Delta_{n, s}^{j,
    2}\hat{f}\Big)\Big\|_{\ell^p}\\
\lesssim \sum_{s\ge0}\sum_{j\in\ZZ}\log s\bigg\|\Big(\sum_{n\ge\max\{j,-j/\chi, s\}}\big|\mathcal
  F^{-1}\big(m_n\Delta_{n, s}^{j, 2}\hat{f}\big)\big|^2\Big)^{1/2}\bigg\|_{\ell^p}.
\end{multline}
In the last step we have used \eqref{eq:52}.
The task now is to show that for some $\varepsilon=\varepsilon_p>0$ 
\begin{align}
  \label{eq:73}
  \bigg\|\Big(\sum_{n\ge\max\{j,-j/\chi, s\}}\big|\mathcal
  F^{-1}\big(m_n\Delta_{n, s}^{j,
    2}\hat{f}\big)\big|^2\Big)^{1/2}\bigg\|_{\ell^p}
\lesssim
  s^{-2}2^{-\varepsilon j}\|f\|_{\ell^p}.
\end{align}
This in turn will imply $I_p^1\lesssim \|f\|_{\ell^p}$ and the proof
will be completed. We have assumed that $p\ge 2$, then for
every $g\in\ell^r(\ZZ^d)$ such that $g\ge 0$ with $r=(p/2)'>1$ we have
by  \eqref{eq:63}, the Cauchy--Schwarz inequality and \eqref{eq:55} that 
\begin{multline}
  \label{eq:40}
  \sum_{x\in\ZZ^d}\sum_{n\in\ZZ}\big|\mathcal
  F^{-1}\big(m_n\Delta_{n, s}^{j,
    2}\hat{f}\big)(x)\big|^2g(x)
\lesssim \sum_{x\in\ZZ^d}\sum_{n\in\ZZ}M_{2^n}\big(\big|\mathcal
  F^{-1}\big(\Delta_{n, s}^{j,
    2}\hat{f}\big)\big|\big)(x)^2g(x)\\
\le \sum_{x\in\ZZ^d}\sum_{n\in\ZZ}M_{2^n}\big(\big|\mathcal
  F^{-1}\big(\Delta_{n, s}^{j,
    2}\hat{f}\big)\big|^2\big)(x)g(x)=
\sum_{x\in\ZZ^d}\sum_{n\in\ZZ}\big|\mathcal
  F^{-1}\big(\Delta_{n, s}^{j,
    2}\hat{f}\big)(x)\big|^2 M_{2^n}^*g(x)\\
\lesssim
\bigg\|\Big(\sum_{n\in\ZZ}\big|\mathcal
  F^{-1}\big(\Delta_{n, s}^{j,
    2}\hat{f}\big)\big|^2\Big)^{1/2}\bigg\|_{\ell^p}^2
 \big\|\sup_{N\in\NN} M_N^*g\big\|_{\ell^r}
\lesssim
\bigg\|\Big(\sum_{n\in\ZZ}\big|\mathcal
  F^{-1}\big(\Delta_{n, s}^{j,
    2}\hat{f}\big)\big|^2\Big)^{1/2}\bigg\|_{\ell^{p}}^2
 \|g\|_{\ell^r}.
\end{multline}
Therefore, by Theorem \ref{thm:30} we have
\begin{align}
  \label{eq:43}
\bigg\|\Big(\sum_{n\in\ZZ}\big|\mathcal
  F^{-1}\big(m_n\Delta_{n, s}^{j,
    2}\hat{f}\big)\big|^2\Big)^{1/2}\bigg\|_{\ell^p}
\lesssim
\bigg\|\Big(\sum_{n\in\ZZ}\big|\mathcal
  F^{-1}\big(\Delta_{n, s}^{j,
    2}\hat{f}\big)\big|^2\Big)^{1/2}\bigg\|_{\ell^p}\lesssim \log s\|f\|_{\ell^p}.  
\end{align}

We refine the estimate in \eqref{eq:43} for
$p=2$. Indeed, define 
\[
\varrho_{n, j}(\xi)=\big(\eta\big(2^{nA+jI}(\xi)\big)^2-
\eta\big(2^{nA+(j+1)I}(\xi)\big)^2\big)
\eta\big(2^{s(A-\chi I)}(\xi)\big)
\] 
and 
\[
\Psi_n(\xi)=\min\big\{|2^{nA}\xi|_{\infty}, |2^{nA}\xi|_{\infty}^{-1/d}, 1\big\}.
\]
By Plancherel's theorem we have
\begin{multline}
  \label{eq:46}
\bigg\|\Big(\sum_{n\ge\max\{j,-j/\chi, s\}}\big|\mathcal
  F^{-1}\big(m_n\Delta_{n, s}^{j,
    2}\hat{f}\big)\big|^2\Big)^{1/2}\bigg\|_{\ell^2}\\
  =\bigg(\int_{\TT^d}\sum_{n\ge\max\{j,-j/\chi, s\}}\sum_{a/q\in\mathscr
    U_{(s+1)^l}\setminus\mathscr
   U_{s^l}}|m_n(\xi)|^2\varrho_{n,j}(\xi-a/q)^2|\hat f(\xi)|^2{\rm
   d}\xi\bigg)^{1/2}\\
\lesssim (s+1)^{-\delta l}2^{-|j|/(2d)}\|f\|_{\ell^2}.
\end{multline}
The last estimate is implied by \eqref{eq:66}. Namely, by
\eqref{eq:66} we may write
\begin{multline}
  \label{eq:72}
\sum_{n\ge\max\{j,-j/\chi, s\}}\sum_{a/q\in\mathscr
    U_{(s+1)^l}\setminus\mathscr
   U_{s^l}}|m_n(\xi)|^2\varrho_{n,j}(\xi-a/q)^2\\
\lesssim
\sum_{n\ge\max\{j,-j/\chi, s\}}\sum_{a/q\in\mathscr
    U_{(s+1)^l}\setminus\mathscr
    U_{s^l}}q^{-2\delta}\big(\Psi_n(\xi-a/q)+2^{-n/2}\big)\big(2^{-|j|/d}+2^{-n/2}\big)\eta\big(2^{s(A-\chi
  I)}(\xi-a/q)\big)^2\\
\lesssim (s+1)^{-2\delta l}2^{-|j|/(2d)}.
\end{multline}
The last line follows, since we have used the lower bound for $q\ge s^{l}$ if
$a/q\in\mathscr U_{(s+1)^l}\setminus\mathscr U_{s^l}$. Moreover,
\[
\sum_{n\ge0}\big(\Psi_n(\xi-a/q)+2^{-n/2}\big)\lesssim 1
\]
and 
\[
\sum_{a/q\in\mathscr
    U_{(s+1)^l}\setminus\mathscr
    U_{s^l}}\eta\big(2^{s(A-\chi I)}(\xi-a/q)\big)\lesssim 1
\]
by the disjointness of the supports of $\eta\big(2^{s(A-\chi
  I)}(\xi-a/q)\big)$'s whenever $a/q\in\mathscr
    U_{(s+1)^l}\setminus\mathscr
    U_{s^l}$. Since $l\in\NN$ can be as large as we
wish thus interpolating  \eqref{eq:46} with \eqref{eq:43} we obtain
\eqref{eq:73} and the proof of \eqref{eq:84} and consequently Theorem \ref{thm:0} is completed. 

\end{proof}

\begin{bibliography}{discrete}
	\bibliographystyle{amsplain}
\end{bibliography}

\end{document}